\newif\ifpictures
\newif\iffullversion
\numberwithin{equation}{section}
\newtheorem{thm}{Theorem}
\newtheorem{prop}[thm]{Proposition}
\newtheorem{lemma}[thm]{Lemma}
\newtheorem{cor}[thm]{Corollary}
\newtheorem{fact}[thm]{Fact}
\theoremstyle{definition}
\newtheorem{example}[thm]{Example}
\newtheorem{remark1}[thm]{Remark}
\newtheorem{openproblem1}[thm]{Open problem}
\newenvironment{ex}{\begin{example}\rm}{\end{example}}
\newenvironment{rem}{\begin{remark1}\rm}{\end{remark1}}
\numberwithin{thm}{section}
\newcounter{FNC}[page]
\def\newfootnote#1{{\addtocounter{FNC}{2}$^\fnsymbol{FNC}$%
     \let\thefootnote\relax\footnotetext{$^\fnsymbol{FNC}$#1}}}
\newcommand{\C}{\mathbb{C}}
\newcommand{\N}{\mathbb{N}}
\newcommand{\R}{\mathbb{R}}
\renewcommand{\P}{\mathbb{P}}
\newcommand{\Z}{\mathbb{Z}}
\newcommand{\T}{\mathbb{T}}
\newcommand{\Sph}{\mathbb{S}}
\newcommand{\V}{\mathcal{V}}
\newcommand{\coamoeba}{\mathcal{C}}
\newcommand{\cA}{\mathcal{A}}
\newcommand{\cU}{\mathcal{U}}
\newcommand{\cC}{\mathcal{C}}
\DeclareMathOperator{\conv}{conv}
\DeclareMathOperator{\supp}{supp}
\DeclareMathOperator{\trop}{trop}
\DeclareMathOperator{\Arg}{Arg}
\DeclareMathOperator{\Log}{Log}
\DeclareMathOperator{\New}{New}
\DeclareMathOperator{\Res}{Res} 
\DeclareMathOperator{\tdeg}{tdeg} 
\DeclareMathOperator{\vol}{vol}
\DeclareMathOperator{\re}{re}
\DeclareMathOperator{\im}{im}
\title{Approximating amoebas and coamoebas \\
by sums of squares} 
\author{Thorsten Theobald and Timo de Wolff}
\address{Goethe-Universit\"at, FB 12 -- Institut f\"ur Mathematik,
Postfach 11 19 32, D--60054 Frankfurt am Main, Germany}
\email{\{theobald,wolff\}@math.uni-frankfurt.de}
\thanks{Research supported by DFG grant TH 1333/2-1 and (first author) by the 
A.\ v.\ Humboldt-Foundation.}
\subjclass[2010]{14P10, 14Q10, 90C22}
\keywords{Amoebas, sums of squares, real Nullstellensatz, coamoebas,
  semidefinite programming}
\begin{document}

\begin{abstract}
Amoebas and coamoebas are the logarithmic images of algebraic varieties
and the images of algebraic varieties under the arg-map, respectively.
We present new techniques for computational problems on amoebas
and coamoebas, thus establishing new connections between (co-)amoebas, 
semialgebraic and convex algebraic geometry and semidefinite programming.

Our approach is based on formulating the membership problem in amoebas
(respectively coamoebas) as a suitable real algebraic feasibility problem.
Using the real Nullstellensatz, this allows to tackle the problem
by sums of squares techniques and semidefinite programming.
Our method yields polynomial identities as certificates of
non-containment of a point in an amoeba or coamoeba.
As the main theoretical result, we establish some degree bounds on the polynomial 
certificates. Moreover, we provide some actual computations of amoebas based 
on the sums of squares approach.
\end{abstract}

\maketitle

\markright{\uppercase{Approximating amoebas and coamoebas}}



\section{Introduction}
For an ideal $I \subset \C[Z_1, \ldots, Z_n]$ an amoeba
(introduced by Gel$'$fand, Kapranov, and Zelevinsky \cite{gkz-94},
see also the surveys \cite{mikhalkin-amoebastrop} or 
\cite{passare-tsikh-2005})
is the image of the variety $\V(I)$ where each complex coordinate
is mapped to (the logarithm of) its absolute value. 
It is often customary and useful to 
consider the logarithmic version of an \emph{amoeba} 
\[
  \mathcal{A}_I \ = \ \{ (\log |z_1|, \ldots, \log |z_n|) \, : \, z \in 
  \V(I) \cap (\C^*)^n \} 
\]
with $\C^* := \C \setminus \{0\}$, and we denote the \emph{unlog amoeba} by
\[
  \mathcal{U}_I \ = \ \{ (|z_1|, \ldots, |z_n|) \, : \, z \in \V(I) \} \, .
\]
Similarly, the \emph{coamoeba} $\mathcal{C}_I$ is defined as
$\mathcal{C}_I := \Arg(\V(I) \cap (\C^*)^n)$, where $\Arg$ denotes the
mapping
\[
  (z_1, \ldots, z_n) \mapsto (\arg (z_1), \ldots, \arg(z_n)) \in
 \T^n := (\R / 2 \pi \Z)^n
\]
and $\arg$ denotes the argument of a complex number
(see \cite{nilsson-diss,nilsson-passare-2009,nilsson-passare-2010,
nisse-2009-geometriccombinatorial}).
If $I$ is a principal ideal generated by a polynomial $f$,
we shortly write $\mathcal{A}_f := \mathcal{A}_{\langle f \rangle}$ and
analogously $\mathcal{U}_f$, $\mathcal{C}_f$.

Studying computational questions of amoebas has been initiated
in \cite{theobald-expmath-2002}, where in particular certain
special classes of amoebas (e.g., two-dimensional amoebas,
amoebas of Grassmannians) were studied. One of the natural and
fundamental computational questions is the membership problem, 
which asks for a given point $(\lambda_1, \ldots, \lambda_n)$ whether
this point is contained in an (unlog) amoeba respectively coamoeba.

In \cite{purbhoo-2008} Purbhoo provided a characterization for the
points in the complement of a hypersurface amoeba which can be used
to numerically approximate the amoeba. His \emph{lopsidedness criterion}
provides an inequality-based certificate for non-containment of a point 
in an amoeba, but does not provide an algebraic certificate
(in the sense of a polynomial identity certifying the non-containment).
The certificates are given by iterated resultants.
With this technique the amoeba can be approximated by a limit process.
The computational efforts of computing the resultants are growing quite
fast, and the convergence is slow.

A different approach to tackle computational problems on amoebas
is to apply suitable Nullstellen- or
Positivstellens\"atze from real algebraic geometry
or complex geometry. For some natural problems a 
direct approach via the Nullstellensatz (applied on a 
realification of the problem)
is possible. Using a degree truncation approach, this allows
to find sum-of-squares-based polynomial identities which
certify that a certain point is located outside of an amoeba
or coamoeba. In particular, it is well known from recent
lines of research in computational semialgebraic 
geometry (see, e.g., \cite{lasserre-2001,laurent-2009-survey,parrilo-2003})
that
these certificates can be computed via semidefinite programming (SDP).

In this paper, we discuss theoretical foundations as well as
some practical issues of such an approach, thus establishing
new connections between amoebas, semialgebraic and convex algebraic
geometry and semidefinite programming.
Firstly, we present various Nullstellensatz-type formulations
(a \textit{standard approach} in Statement~\ref{co:amoebacertificate} and a \textit{monomial approach} in Statement \ref{co:monomialbasedcertificate}) and compare their properties to a recent toric Nullstellensatz of
Niculescu and Putinar \cite{niculescu-putinar-2010}.
Using a degree truncation approach this yields a sequence of supersets of the amoeba $\cA_f$, which converges to $\cA_f$ (Theorem~\ref{th:boundeddegree}). For every fixed superset in this sequence, the membership problem can be solved by semidefinite programming.

The main theoretical contribution is contained in 
Section~\ref{se:specialcertificates}.
For one of our approaches, we can provide some degree bounds for
the certificates (Corollary~\ref{co:certificate}). It is remarkable
and even somewhat surprising
that these degree bounds are derived from Purbhoo's lopsidedness criterion
(which is not at all sum-of-squares-based).
We also show that
in certain cases (such as for the Grassmannian of lines) the degree 
bounds can be reduced to simpler amoebas (Theorem~\ref{th:transfercertificates}).

In Section~\ref{se:computing} we provide some actual
computations on this symbolic-numerical approach. Besides providing
some results on the membership problem itself, we will also consider
more sophisticated versions (such as bounding the diameter of a complement
component for certain classes). 

Finally, in Section~\ref{se:outlook}
we give an outlook on further questions on the approach initiated
in the current paper.

\section{Preliminaries}

In the following, let $\C[Z] = \C[Z_1, \ldots, Z_n]$
denote the polynomial ring over $\C$ in $n$ variables.
For $f = \sum_{\alpha \in A} c_{\alpha} Z^{\alpha} \in \C[Z]$,
the Newton polytope 
$\New(f) = \conv \{ \alpha \in \N_0^n \, : \, \alpha \in \supp(f) \}$
of $f$ is the convex hull of the exponent vectors,
where $\supp(f)$  denotes the support of $f$.

\begin{figure}
\includegraphics[width=0.45\linewidth]{./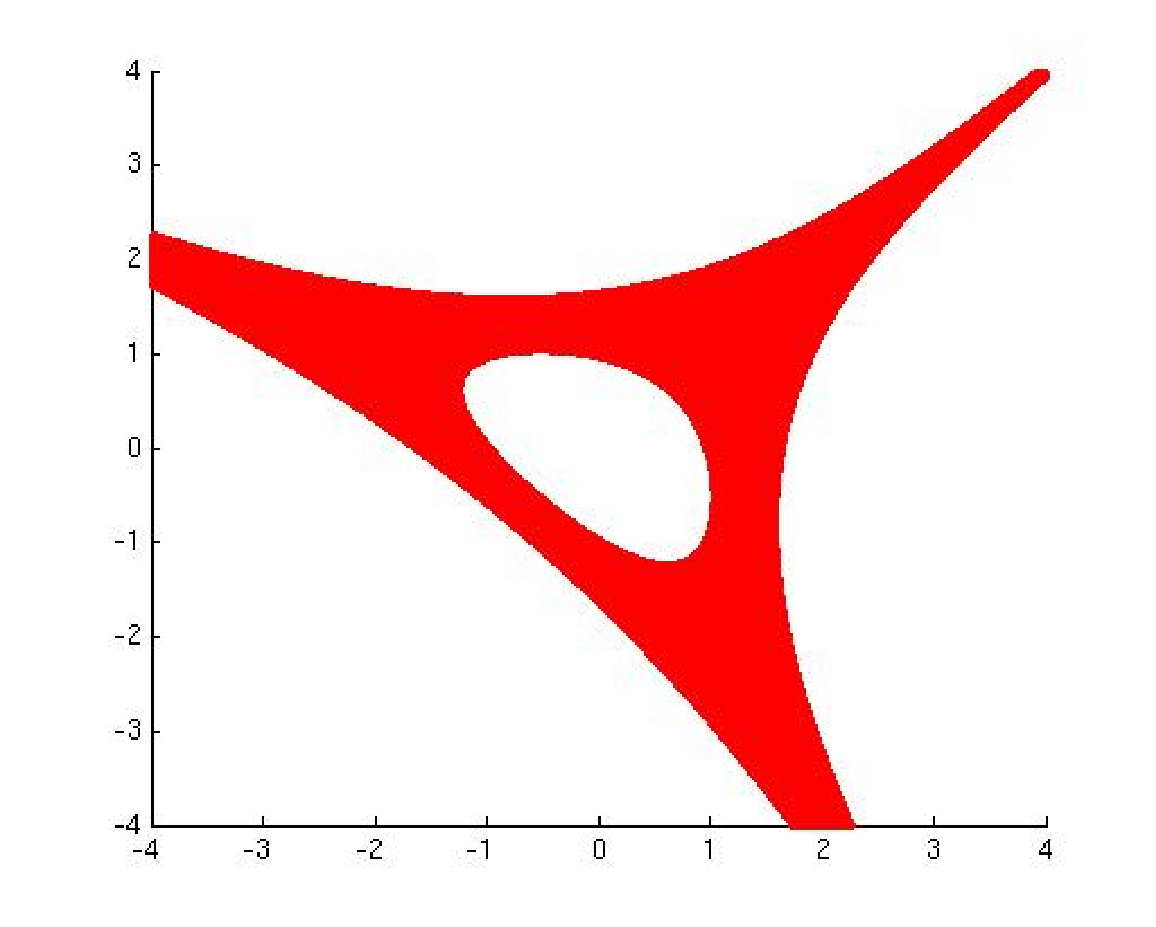}
\caption{The amoeba of $f = Z_1^2Z_2 + Z_1Z_2^2 - 4Z_1Z_2 + 1$.}
\label{FigureExampleAmoeba}
\end{figure}

\subsection{Amoebas and coamoebas}
We recall some basic statements about amoebas 
(see \cite{fpt-2000,mikhalkin-amoebastrop,passare-tsikh-2005}).
For any ideal $I \subset \C[Z]$, the
amoeba $\mathcal{A}_I$ is a closed set.
For a polynomial $f \in \C[Z]$, 
the complement of the hypersurface amoeba $\mathcal{A}_f$
consists of finitely many convex regions, and
these regions are in bijective correspondence with the different
Laurent expansions of the rational function $1/f$.
See Figure~\ref{FigureExampleAmoeba} for an example.

The order $\nu$ of a point $w$ in the complement 
$\R^n \setminus \mathcal{A}_f$ is given by
\[
  \nu_j \ = \ \frac{1}{(2\pi i)^n} \int_{\Log^{-1}(w)} \frac{z_j \partial_j f(z)}{f(z)}
    \frac{dz_1 \cdots dz_n}{z_1 \cdots z_n} \, , \; 1 \le j \le n \, ,
\]
where $\Log(z)$ is defined as $\Log(z) = (\log |z_1|, \ldots, \log |z_n|)$.
The order mapping induces an injective map from the set of complement
components into $\New(f) \cap \Z^n$.
The complement components corresponding to the vertices of $\New(f)$ do always exist \cite{fpt-2000}.

Similarly, for $f \in \C[Z]$
any connected component of the coamoeba complement
$\T^n \setminus \coamoeba_f$ is a convex set (see Figure~\ref{FigureExampleCoamoeba}).
If $\overline{\coamoeba_f}$ denotes the closure of $\coamoeba_f$
in the torus $\T^n$, then the number of connected components of
$\T^n \setminus \overline{\coamoeba_f}$ is bounded by
$n! \vol \New(f)$
(Nisse \cite[Thm. 5.19]{nisse-2009-geometriccombinatorial}),
where $\vol$ denotes the volume.

\begin{figure}
  \includegraphics[width=0.45\linewidth]{./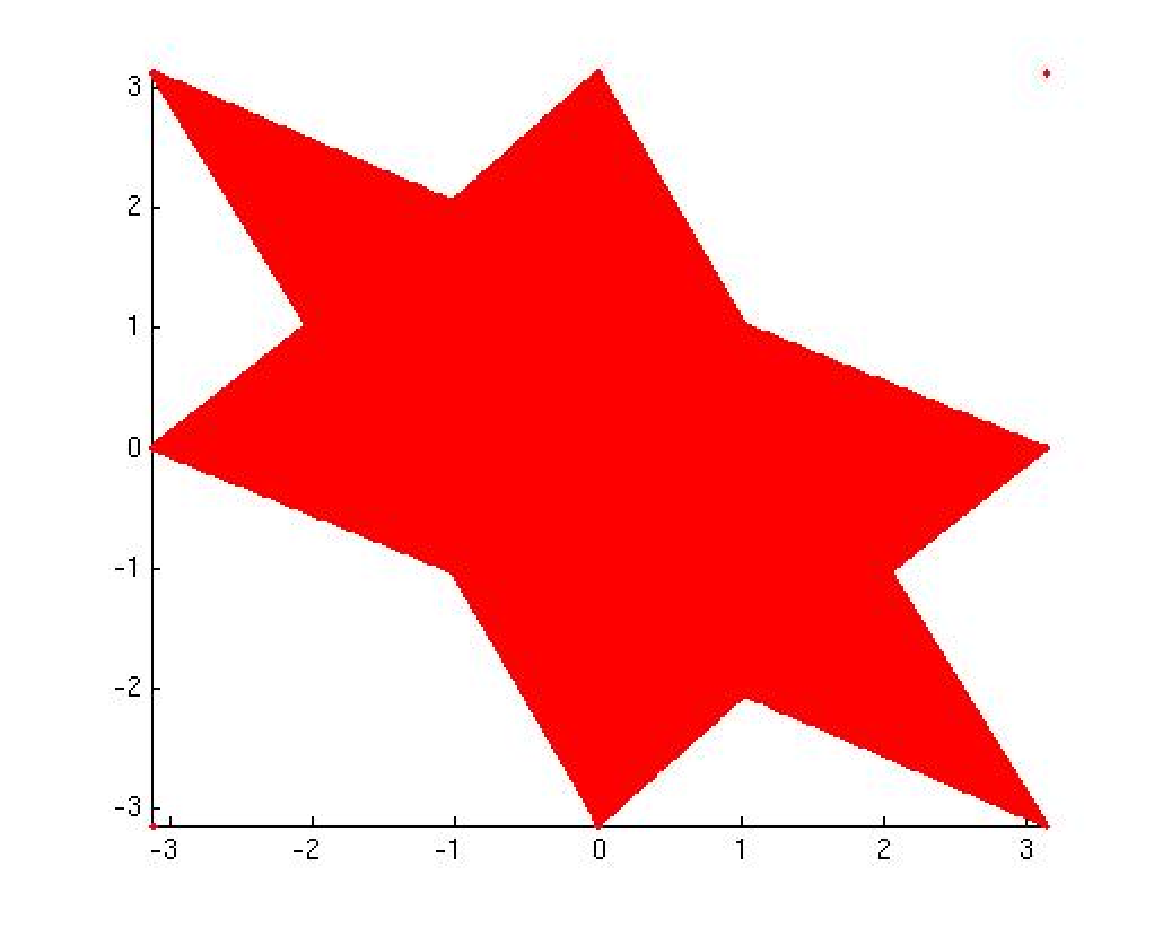} 
  \includegraphics[width=0.45\linewidth]{./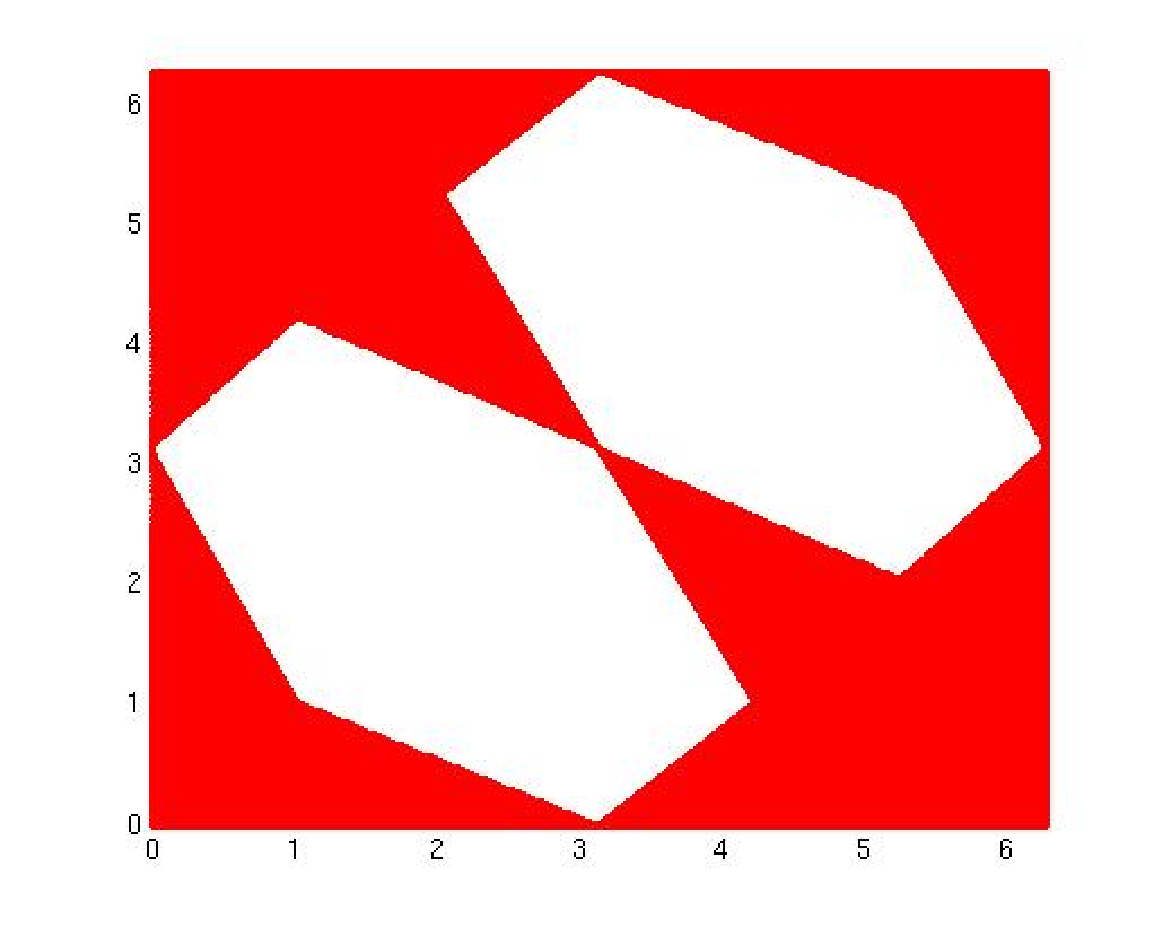}
\caption{The coamoeba of $f = Z_1^2Z_2 + Z_1Z_2^2 - 4Z_1Z_2 + 1$ in two different views
of $\T^2$, namely $[-\pi,\pi)^2$ versus $[0,2\pi)^2$.}
\label{FigureExampleCoamoeba}
\end{figure}

For technical reasons (see Theorem \ref{Thm:CoamoebaCertificate}) it will be often
convenient to consider in the definition of a coamoeba also
those points $z \in \V(I)$ which have a zero-component.
Namely, if a zero $z$ of $I$ has a zero-component $z_j = 0$
then we associate this component to \emph{any} phase. Call this
modified version of a coamoeba $\mathcal{C}_I'$, i.e., 
\begin{eqnarray*}
	\mathcal{C}_I' & :=	& \{\phi \in \T^n \ : \ \exists z \in \V(I): \arg(z_j) = \phi_j \text{ or } z_j = 0 \text{ for } 1 \leq j \leq n\}.
\end{eqnarray*}

Note that for principal ideals $I=\langle f \rangle$
the difference between $\mathcal{C}_I$ and $\mathcal{C}_I'$ 
solely may occur at points which are contained in the closure of
$\mathcal{C}_I$.
The set-theoretic difference of $\mathcal{C}_I$ and $\mathcal{C}_I'$
is a lower-dimensional subset of $\R^n$ 
(since in each environment of a point in $\coamoeba'_I \setminus \coamoeba_I$
we have a coamoeba point).

\subsection{The situation at $\infty$}
It is well-known that the geometry of amoebas at infinity (i.e.,
the ``tentacles'') can be characterized in terms of logarithmic
limit sets and tropical geometry, and thus amoebas form one of the 
building blocks of tropical geometry (for general background
on tropical geometry we refer
to \cite{maclagan-sturmfels,mikhalkin-amoebastrop,rst-2005}).
\iffullversion

For (large) $R > 0$ let $\mathcal{A}^{(R)}$ denote the scaled version
$\mathcal{A}^{(R)}_I \ := \ \frac{1}{R} \mathcal{A}_I \cap \Sph^{n-1}$,
where $\Sph^{n-1}$ denotes the $(n-1)$-dimensional unit sphere.
Extending this definition, 
the \emph{logarithmic limit set} $\mathcal{A}^{(\infty)}_I$ 
is the set of points $v \in \Sph^{n-1}$ such that there exists a sequence
$v_R \in \mathcal{A}^{(R)}_I$ with $\lim_{R \to \infty} v_R = v$.
For a polynomial $f = \sum_{\alpha} c_{\alpha} Z^{\alpha} \in \C[Z]$ 
denote by $\trop f := \bigoplus 0 \odot Z_1^{\alpha_1} \odot \cdots \odot Z_n^{\alpha_n}$
its tropicalization (with respect to the trivial valuation)
over the tropical semiring $(R,\oplus,\odot) := (\R \cup \{-\infty\},\max,+)$. 
Then (see \cite{maclagan-sturmfels,sturmfels-solving-2002}):
\else
For more details we refer to the full version of the paper.
\fi

\begin{prop}
A vector $w \in \R^n \setminus \{0\}$ is contained in the tropical variety of $I$
if and only if the corresponding unit vector $\frac{1}{||w||}w$ is
contained in $\mathcal{A}^{(\infty)}_I$. Thus the tropical variety of $I$
coincides with the cone over the logarithmic limit set $\mathcal{A}^{(\infty)}_I$.
\end{prop}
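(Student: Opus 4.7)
The plan is to establish the claimed equality by proving both inclusions between $\trop(I)$ and the cone over $\mathcal{A}^{(\infty)}_I$. Both sets are invariant under positive scaling: the tropical variety is a cone because the condition ``the maximum in $\trop f$ is attained at least twice at $w$'' is preserved under $w \mapsto \lambda w$ for $\lambda > 0$, while the cone over $\mathcal{A}^{(\infty)}_I$ is a cone by construction. Consequently, it suffices to prove the equivalence at the level of unit vectors $v = w/\|w\| \in \Sph^{n-1}$.

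For the direction ``amoeba-limit implies tropical,'' suppose $v \in \mathcal{A}^{(\infty)}_I$. Then there exist $R_k \to \infty$ and $z^{(k)} \in \V(I) \cap (\C^*)^n$ with $\tfrac{1}{R_k} \Log(z^{(k)}) \to v$. For any $f = \sum_\alpha c_\alpha Z^\alpha \in I$, the vanishing relation $f(z^{(k)}) = 0$ forces some two distinct monomials $c_\alpha (z^{(k)})^\alpha$ and $c_\beta (z^{(k)})^\beta$ to have absolute values of comparable magnitude (otherwise one monomial would dominate and cancellation would be impossible). Taking logarithms, dividing by $R_k$, and passing to the limit shows that $\max_\alpha \langle \alpha, v \rangle$ is attained at least twice, i.e., $v \in \trop(f)$. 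As this argument applies uniformly to every $f \in I$, we conclude $v \in \trop(I)$.

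For the converse, I would invoke the fundamental theorem of tropical geometry (Kapranov for hypersurfaces, Einsiedler--Kapranov--Lind in general): for the trivial valuation on $\C$, the set $\trop(I)$ coincides with the image of $\V(I \otimes_{\C} K) \cap (K^*)^n$ under the coordinatewise valuation map, where $K = \C\{\!\{t\}\!\}$ is the field of Puiseux series. Given $v \in \trop(I) \cap \Sph^{n-1}$, select such a Puiseux solution $z(t) = (z_1(t), \ldots, z_n(t))$ with $\operatorname{val}(z_j(t)) = -v_j$ for each $j$. Each component converges on a punctured neighbourhood of $0$, so the specializations $t = e^{-R}$ for large $R$ produce genuine points $z_R \in \V(I) \cap (\C^*)^n$ whose scaled logarithms $\Log(z_R)/R$ tend to $v$, placing $v$ into $\mathcal{A}^{(\infty)}_I$.

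The main obstacle is the converse direction: one must verify that the formal Puiseux-series solution provided by the fundamental theorem corresponds to an actually convergent family of complex solutions with the prescribed asymptotic slope. In the hypersurface case this follows from the classical Puiseux parametrization of plane algebraic curves, but in the general ideal-theoretic setting one has to appeal to Artin-style approximation or to structure theorems for rigid analytic varieties over $K$. Granted this step, the identification of $\trop(I)$ with the cone over $\mathcal{A}^{(\infty)}_I$ follows immediately from the homogeneity observed at the outset.
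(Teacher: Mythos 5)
The paper does not actually prove this proposition: it is stated as a known result and delegated to the references \cite{maclagan-sturmfels,sturmfels-solving-2002}. So there is no in-paper proof to compare against; I will instead evaluate your argument on its own.

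Your overall strategy (reduce to unit vectors using that both sets are cones, argue the forward inclusion directly from the no-dominant-term principle, and derive the reverse inclusion from the fundamental theorem of tropical geometry over Puiseux series) is the standard route and is sound in outline. Two points should be tightened. In the forward direction, the pair of monomials of ``comparable magnitude'' may change with $k$; you need to pass to a subsequence along which the index of the largest term and the index of the second-largest term both stabilize before taking the limit, so that a single pair $\alpha\neq\beta$ witnesses $\langle\alpha,v\rangle=\langle\beta,v\rangle=\max_\gamma\langle\gamma,v\rangle$. This is routine but should be said.

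The converse direction has a gap beyond the one you flag. The lifting half of the fundamental theorem produces a point $z(t)\in\V(I)\cap(K^*)^n$ with a prescribed valuation vector only when that vector lies in the value group of $K$, which for $K=\C\{\!\{t\}\!\}$ is $\Q^n$; a unit vector $v\in\Sph^{n-1}$ is generically irrational, so no Puiseux point realizes $v$ on the nose. The fix is standard but must be included: $\trop(I)$ is a rational polyhedral fan, so there are rational rays $v'\to v$ inside $\trop(I)$; lift those, specialize, and conclude $v'/\|v'\|\in\mathcal{A}^{(\infty)}_I$; then use that $\mathcal{A}^{(\infty)}_I$ is closed in $\Sph^{n-1}$ to get $v\in\mathcal{A}^{(\infty)}_I$. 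As for the convergence issue you raise yourself, it is genuine if one insists on the \emph{formal} Puiseux field, but it is usually circumvented cleanly by working from the start over the field of \emph{locally convergent} Puiseux series, which is likewise algebraically closed with value group $\Q$ (Newton--Puiseux in the analytic category); Artin approximation is heavier machinery than is needed here. With the subsequence argument, the rational-density-plus-closedness step, and the convergent Puiseux field in hand, your proof is complete.
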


\section{Approximations based on the real Nullstellensatz\label{se:nullstellensaetze}}

We study certificates of points in the complement of the amoeba
based on the real Nullstellensatz and compare them to 
existing statements in the literature (such as the toric Nullstellensatz
of Niculescu and Putinar). By imposing degree truncations this will
then yield a hierarchy of certificates of bounded degree.

We use the following real Nullstellensatz (see, e.g.,
\cite{bcr-98,prestel-delzell-2001}):

\begin{prop} \label{pr:realnullstellensatz}
For polynomials $g_1, \ldots, g_r \in \R[X]$
and $I:=\langle g_1, \ldots, g_r \rangle \subset \R[X]$
the following statements are equivalent:
\begin{itemize}
\item The real variety $\V_{\R}(I)$ is empty.
\item There exist a polynomial $G \in I$ and a sum of squares polynomial
  $H$ with
\[
  G + H + 1 \ = \ 0 \, .
\]
\end{itemize}
\end{prop}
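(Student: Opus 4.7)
The plan is to prove the two directions separately. The ``if'' direction is a one-line evaluation argument, and the ``only if'' direction I would deduce from the Krivine--Stengle Positivstellensatz, which I would invoke as a black box from the cited references.

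For the easy direction, suppose $G \in I$ and a sum of squares $H$ satisfy $G + H + 1 = 0$. If $x \in \V_{\R}(I)$, then $G(x) = 0$ since $G$ lies in the ideal generated by the $g_i$, and $H(x) \ge 0$ since $H$ is a sum of squares of real polynomials evaluated at a real point. Therefore $G(x) + H(x) + 1 \ge 1 > 0$, contradicting the identity. Hence $\V_{\R}(I) = \emptyset$.

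For the substantive direction, assume $\V_{\R}(I) = \emptyset$. Rewrite the system of equations $g_1 = 0, \ldots, g_r = 0$ as the system of inequalities $g_i \ge 0$ and $-g_i \ge 0$ for $1 \le i \le r$; its real solution set is precisely $\V_{\R}(I)$, which is empty. The Positivstellensatz then guarantees an algebraic certificate of infeasibility: an identity of the form
\[
   -1 \ = \ \sigma_0 \ + \ \sum_{\alpha} \sigma_{\alpha} \prod_{i=1}^r g_i^{\alpha_i},
\]
where each $\sigma_{\alpha}$ is a sum of squares in $\R[X]$ and the sum ranges over finitely many nonzero multi-indices $\alpha$. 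Now set $H := \sigma_0$, which is a sum of squares, and collect the remaining summands into
\[
   G \ := \ \sum_{\alpha \neq 0} \sigma_{\alpha} \prod_{i=1}^r g_i^{\alpha_i}.
\]
Every such summand contains at least one factor $g_i$, so $G \in I$. The Positivstellensatz identity then becomes $-1 = H + G$, i.e., $G + H + 1 = 0$, as desired.

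The main conceptual obstacle is the Positivstellensatz itself, whose proof uses the Tarski--Seidenberg transfer principle and an ordered-field/Zorn's lemma argument to pass from the absence of real solutions to an algebraic identity; however, we treat this as standard input from \cite{bcr-98,prestel-delzell-2001}. The only genuine work in the present statement is the bookkeeping step of separating the ``ideal part'' $G$ from the ``sum of squares part'' $H$, and verifying that the products $\sigma_{\alpha} \prod g_i^{\alpha_i}$ with $\alpha \neq 0$ all lie in $I$, which is immediate since each contains at least one generator.
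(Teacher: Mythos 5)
The paper does not prove this proposition; it is stated as a standard fact with a citation to \cite{bcr-98} and \cite{prestel-delzell-2001}, where it appears as a (weak) form of the Real Nullstellensatz: $\V_{\R}(I)=\emptyset$ if and only if $-1 \in I + \Sigma$, $\Sigma$ denoting the cone of sums of squares. Your argument is a correct re-derivation of that fact. The evaluation argument for the easy direction is sound, and deducing the hard direction from the Krivine--Stengle Positivstellensatz via the doubling $g_i = 0 \Leftrightarrow (g_i \ge 0 \text{ and } -g_i \ge 0)$ is a standard and valid route. One small imprecision worth flagging: with the doubled generators $\pm g_1,\ldots,\pm g_r$, the Positivstellensatz gives $-1 = \sum_{e \in \{0,1\}^{2r}} \sigma_e \prod_i g_i^{e_i}(-g_i)^{e_{r+i}}$; collapsing this to $\sum_\alpha \sigma_\alpha \prod_i g_i^{\alpha_i}$ as you wrote silently discards the signs $(-1)^{\sum_i e_{r+i}}$, which cannot be absorbed into the SOS multipliers $\sigma_\alpha$ since those must stay nonnegative. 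This does not damage the proof, because the only property you actually use is that every summand with nonempty index set contains at least one factor $g_i$ and hence (with whatever sign) lies in the ideal $I$; so setting $H := \sigma_\emptyset$ and $G$ equal to the sum of all remaining terms still gives $G \in I$, $H$ a sum of squares, and $G + H + 1 = 0$, as required.
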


Given $\lambda \in (0,\infty)^n$, the question if 
$\lambda$ is contained in the unlog amoeba ${\mathcal U}_I$ 
can be phrased as the real solvability of a real system of polynomial
equations.
For a polynomial $f \in \C[Z] = \C[Z_1, \ldots, Z_n]$ 
let $f^{\re}, f^{\im} \in \R[X,Y] = \R[X_1, \ldots, X_n, Y_1, \ldots, Y_n]$
be its real and imaginary parts, i.e.,
\[ 
  f(Z) \ = \ 
  f(X+iY) \ = \ f^{\re}(X,Y) + i \cdot f^{\im}(X,Y) \, .
\]

\noindent
We consider the ideal $I' \subset \R[X,Y]$ 
generated by the polynomials
\begin{equation}
  \label{eq:iprime}
   \{ f_j^{\re}, f_j^{\im} \, : \, 1 \le j \le r \} 
   \cup
   \big\{ X_k^2 + Y_k^2 - \lambda_k^2 \, : \, 1 \le k \le n \big\} \, . 
\end{equation}

\begin{cor}
\label{co:amoebacertificate}
Let $I = \langle f_1, \ldots, f_r \rangle$, and $\lambda \in (0,\infty)^n$.
Either the point $\lambda$ is contained in 
$\mathcal{U}_I$, or there exist a polynomial $G \in I' \subset \R[X,Y]$ 
and a sum of squares polynomial $H \in \R[X,Y]$ with
\begin{equation}
  \label{eq:amoebacertificate}
  G + H + 1 \ = \ 0 \, .
\end{equation}
\end{cor}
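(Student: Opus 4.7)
The plan is to translate membership in the unlog amoeba into real solvability of the ideal $I'$ and then quote the real Nullstellensatz (Proposition~\ref{pr:realnullstellensatz}).

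First I would verify the equivalence $\lambda \in \mathcal{U}_I \iff \V_{\R}(I') \neq \emptyset$. Writing each complex coordinate as $z_k = x_k + i y_k$ with $x_k, y_k \in \R$, the decomposition $f_j(z) = f_j^{\re}(x,y) + i f_j^{\im}(x,y)$ shows that $f_j(z) = 0$ is equivalent to the simultaneous vanishing of $f_j^{\re}$ and $f_j^{\im}$ at $(x,y)$. Similarly, $|z_k| = \lambda_k$ is equivalent to $x_k^2 + y_k^2 - \lambda_k^2 = 0$ (here we use $\lambda_k > 0$ so that squaring causes no issue). Therefore a real point $(x,y) \in \V_{\R}(I')$ corresponds precisely to a complex zero $z$ of $I$ satisfying $|z_k| = \lambda_k$ for each $k$, i.e., to a preimage of $\lambda$ under the unlog amoeba map.

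Next, assuming $\lambda \notin \mathcal{U}_I$, the real variety $\V_{\R}(I')$ is empty. Since the generators of $I'$ listed in \eqref{eq:iprime} are polynomials in $\R[X,Y]$, Proposition~\ref{pr:realnullstellensatz} applies directly and yields $G \in I'$ and a sum of squares $H \in \R[X,Y]$ with $G + H + 1 = 0$, which is precisely \eqref{eq:amoebacertificate}.

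There is essentially no obstacle: the statement is a direct application of the real Nullstellensatz once one has set up the realification. The only small point worth checking is that the condition $\lambda \in (0,\infty)^n$ ensures the equations $X_k^2 + Y_k^2 = \lambda_k^2$ correctly encode $|z_k| = \lambda_k$ (and implicitly exclude the degenerate case $\lambda_k = 0$, so that any $(x,y) \in \V_{\R}(I')$ automatically corresponds to a point in $(\C^*)^n$, matching the definition of $\mathcal{U}_I$ in the sense used for amoebas). The dichotomy in the statement then follows immediately.
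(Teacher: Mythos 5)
Your proof is correct and follows the same route as the paper's: realify each $f_j$ into its real and imaginary parts, encode $|z_k| = \lambda_k$ as $x_k^2 + y_k^2 - \lambda_k^2 = 0$, observe $\lambda \in \mathcal{U}_I \iff \V_{\R}(I') \neq \emptyset$, and invoke the real Nullstellensatz. (One minor nit: the paper's definition of $\mathcal{U}_I$ does not restrict to $(\C^*)^n$, so the parenthetical remark about "matching the definition used for amoebas" is unnecessary, though harmless since $\lambda_k > 0$ makes the correspondence work either way.)
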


\begin{proof} For any polynomial $f \in \C[Z]$ it suffices to 
observe that a point $z=x+iy$ is contained in $\V(f)$ if and only
if $(x,y) \in \V_{\R}(f^{\re}) \cap \V_{\R}(f^{\im})$, and that
$|z_k| = \lambda_k$ if and only $(x,y) \in \V_{\R}(X^2+Y^2 - \lambda_k^2)$.
Then the statement follows from Proposition~\ref{pr:realnullstellensatz}.
\end{proof}

Corollary~\ref{co:amoebacertificate} states that 
for any point $\lambda \not\in \mathcal{U}_I$ there exists a certificate 
\begin{eqnarray}
	\sum_{j=1}^r p_j f_j^{\re} + 
	\sum_{j=1}^r p'_j f_j^{\im} +
	\sum_{k=1}^n q_k (X_k^2 + Y_k^2 - \lambda_k^2) + H + 1 & = & 0
\label{Equ:StandardApproach}  
\end{eqnarray}
with polynomials $p_j, p'_j, q_k$ and a sum of squares $H$. We refer to these certificates as \textit{certificates in the standard approach}.

We say that a certificate of the form \eqref{Equ:StandardApproach} is of \textit{degree at most} $d$ if the (total) degree of each summand in \eqref{Equ:StandardApproach} is at most $d$.

\begin{rem} By the following fact (which is easy to check), 
the sum of squares condition~\ref{eq:amoebacertificate} 
can also be stated shortly as 
\[
  -1 \text{ is a sum of squares in the quotient ring } \R[X,Y]/I' \, .
\]
\end{rem}

\medskip 

\begin{fact} \label{fa:sosquotientring}
\emph{(Parrilo \cite{parrilo-algebraicstructure}.)}
Let $I = \langle g_1, \ldots, g_r \rangle \subset \R[X]$
and $f \in \R[X]$.
There exist $p_1, \ldots, p_r \in \R[X]$ such that
\[
  f + \sum_i p_i g_i \text{ is a sum of squares in } \R[X]
\]
if and only if $f \text{ is a sum of squares in } \R[X]/I$.
\end{fact}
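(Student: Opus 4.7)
The plan is to prove both directions by directly unwinding the definition of ``sum of squares in a quotient ring.'' A sum of squares in $\R[X]/I$ is, by definition, an element of the form $\sum_j \bar h_j^2$ where each $\bar h_j$ is the residue class of some $h_j \in \R[X]$; so the whole statement should reduce to a routine lift/descend argument between $\R[X]$ and $\R[X]/I$. I do not expect any genuine obstacle.

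For the forward direction, I would start from a representation $f + \sum_i p_i g_i = \sum_j h_j^2$ in $\R[X]$ and simply reduce modulo $I$. Since each $g_i \in I$, the left-hand side becomes the residue class of $f$, and the right-hand side is visibly a sum of squares in $\R[X]/I$. This gives $f = \sum_j \bar h_j^2$ in $\R[X]/I$ with $\bar h_j$ the class of $h_j$.

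For the backward direction, I would pick representatives. Assume $f = \sum_j \bar h_j^2$ in $\R[X]/I$, and let $h_j \in \R[X]$ be any lifts of $\bar h_j$. Then $f - \sum_j h_j^2$ lies in $I$, so by the definition of $I = \langle g_1, \ldots, g_r\rangle$ there exist $q_1, \ldots, q_r \in \R[X]$ with
\[
  f - \sum_j h_j^2 \ = \ \sum_i q_i g_i.
\]
Setting $p_i := -q_i$ gives $f + \sum_i p_i g_i = \sum_j h_j^2$, which is a sum of squares in $\R[X]$, completing the equivalence.

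The only step worth thinking about at all is making sure the lifts $h_j$ are actually elements of $\R[X]$ (not just of some completion), but this is automatic because $\R[X] \to \R[X]/I$ is surjective. So the ``hard part'' is really just bookkeeping; I would present the argument in two short paragraphs without further machinery.
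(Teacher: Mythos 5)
Your proof is correct. Note that the paper itself offers no proof of this Fact — it is stated as a citation to Parrilo's work — and your two-paragraph lift/descend argument through the surjection $\R[X]\twoheadrightarrow\R[X]/I$ is precisely the standard argument that citation stands for, so there is nothing further to compare.
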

Any two of these equivalent conditions in Fact~\ref{fa:sosquotientring}
is a certificate for the nonnegativity of $f$ on the variety $I$.

Before stating a coamoeba version, we note the following normalization
properties.
Whenever it is needed for amoebas, 
we can assume that the point $\lambda$ in the amoeba membership problem is the all-1-vector
$\mathbf{1} \in \R^n$. Similarly, for the coamoeba membership problem we can assume that the investigated point is the origin $0 \in \T^n$. 

\medskip 

\begin{lemma} \label{le:normalization}
Let $I =  \langle f_1, \ldots, f_r \rangle$.
\begin{enumerate}
\item A point $(\lambda_1, \ldots, \lambda_n) \in (0,\infty)^n$ is contained in 
$\mathcal{U}_I$ if and only if $\mathbf{1} \in
\mathcal{U}_{\langle g_1, \ldots, g_r \rangle}$, where
\[
  g_j(Z_1, \ldots, Z_n) \ := \ f_j(\lambda_1 Z_1, \ldots, \lambda_n Z_n) \, ,
  \quad 1 \le j \le r \, .
\]
\item A point  $(z_1, \ldots, z_n)$ is contained in $\V(I)$ with
$\arg z_j = \mu_j$ if and only if the (nonnegative) real vector 
$y$ with $y_j \ := \ z_j e^{-i \mu_j}$
is contained in $\V(g_1, \ldots, g_r)$ where
\[
  g_j(Z_1, \ldots, Z_n) := f_j(Z_1 e^{i \mu_1}, \ldots, Z_n e^{i \mu_n}) \, ,
  \quad 1 \le j \le r \, .
\]
\end{enumerate}
\label{LemmaNormalform}
\end{lemma}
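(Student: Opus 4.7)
The plan is to treat each part as a straightforward change of coordinates on the algebraic torus $(\C^*)^n$: in part (1) by the real positive rescaling $\varphi \colon Z \mapsto (\lambda_1 Z_1, \ldots, \lambda_n Z_n)$, and in part (2) by the unitary rotation $\psi \colon Z \mapsto (e^{i\mu_1} Z_1, \ldots, e^{i\mu_n} Z_n)$. Both maps are automorphisms of $(\C^*)^n$ (since $\lambda_j > 0$ and $|e^{i\mu_j}| = 1$), and the new polynomials are defined precisely as $g_j = f_j \circ \varphi$ or $g_j = f_j \circ \psi$. Hence in each case the map identifies the zero set of $\langle g_1, \ldots, g_r \rangle$ in $(\C^*)^n$ with $\V(I) \cap (\C^*)^n$, and it only remains to check that it transports the relevant condition on moduli, resp.\ arguments, correctly.

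For part (1) I would compute $|\varphi(z)_j| = \lambda_j |z_j|$, which shows that a zero $z$ of $\langle g_1, \ldots, g_r \rangle$ with $|z_j| = 1$ for all $j$ corresponds under $\varphi$ to a zero $\varphi(z)$ of $I$ with $|\varphi(z)_j| = \lambda_j$ for all $j$; this is exactly the equivalence $\mathbf{1} \in \mathcal{U}_{\langle g_1,\ldots,g_r\rangle} \Longleftrightarrow \lambda \in \mathcal{U}_I$. Since $\lambda \in (0,\infty)^n$ has no zero coordinates, no boundary case arises.

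For part (2) I would use that $\psi$ shifts arguments additively by $\mu_j$. Given $z \in \V(I)$ with $\arg z_j = \mu_j$ whenever $z_j \ne 0$, the vector $y_j := z_j e^{-i\mu_j}$ satisfies $y_j = |z_j| \ge 0$ (so $y$ is nonnegative real) and $g_j(y) = f_j(\psi(y)) = f_j(z) = 0$. Conversely, a nonnegative real zero $y$ of $g_1, \ldots, g_r$ yields $z := \psi(y) \in \V(I)$ with $\arg z_j = \mu_j$ whenever $y_j > 0$. The case $z_j = 0$ is benign, since then $y_j = 0$ as well, consistent with allowing any phase to the zero component as per the discussion of $\coamoeba_I'$.

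No substantive obstacle is anticipated: the proof reduces to verifying that moduli scale multiplicatively under $\varphi$ and arguments shift additively under $\psi$, together with the functorial identity $\V(f \circ \varphi) = \varphi^{-1}(\V(f))$ for any invertible substitution $\varphi$. The only minor point to mention is the handling of possibly vanishing coordinates in part (2), which is already covered by the conventions for $\coamoeba_I'$ introduced earlier.
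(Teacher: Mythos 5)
Your proof is correct and takes essentially the same approach as the paper: both reduce the claim to the observation that the coordinate substitutions $z_j \mapsto z_j/\lambda_j$ (resp.\ $z_j \mapsto z_j e^{-i\mu_j}$) give a bijection between zeros of $I$ and zeros of $\langle g_1,\ldots,g_r\rangle$ that scales moduli by $\lambda_j$ (resp.\ shifts arguments by $\mu_j$). The paper states this more tersely, but the underlying argument is the same; your added remark about vanishing coordinates in part (2) is a reasonable clarification and does not change the substance.
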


\begin{proof}
A point $(z_1, \ldots, z_n)$ is contained in $\V(I)$ 
with $|z_j| = \lambda_j$ if and only if the vector $y$ defined
by $y_j := z_j / \lambda_j$ is contained in $\V(g_1, \ldots, g_r)$ 
with $|y_j| = 1$. The second statement follows similarly.
\end{proof}

\begin{thm} Let $I = \langle f_1, \ldots, f_r \rangle$.
The point $(0, \ldots, 0)$ is contained in the complement of the
coamoeba $\coamoeba'_I$ if and only if there exists a 
polynomial identity
\begin{equation}
\label{th:coamoebasos}
  \sum_{j=1}^r c_j \cdot f_j(X^2,Y)^{\re}
  + \sum_{j=1}^r c_j' \cdot f_j(X^2,Y)^{\im}
  + \sum_{k=1}^n d_k \cdot Y_k + H + 1 \ = \ 0
\end{equation}
with polynomials $c_j, c_j', d_k \in \R[X,Y]$ and a sum 
of squares $H$. Here, $f_j(X^2,Y)$ abbreviates 
$f_j(X_1^2, \ldots, X_n^2,Y_1,\ldots,Y_n)$.
\label{Thm:CoamoebaCertificate}
\end{thm}

\begin{proof}
Note that the statement $0 \in \coamoeba'_I$ is
equivalent to $\{z=x+iy \in \C^n \, : \, z \in \V(I) \text{ and } 
  x_k \ge 0, \, y_k = 0, \, 1 \le k \le n \} \neq \emptyset$.
Moreover, observe that the condition $x_k \ge 0$ can be replaced
by considering $X_k^2$ in the arguments of $f_1, \ldots, f_r$.
Hence, by Proposition~\ref{pr:realnullstellensatz} the statement
$0 \not\in \coamoeba'_I$ is equivalent to the existence of a 
polynomial identity of the form~\eqref{th:coamoebasos}.
\end{proof}

Observe that in the proof the use of $\coamoeba'_I$ (rather than 
$\coamoeba_I$) allowed to use the basic Nullstellensatz (rather
than a Positivstellensatz, which would have introduced several
sum of squares polynomials).

The following variant of the Nullstellensatz approach will allow
to obtain degree bounds (see Section~\ref{se:specialcertificates}).
For vectors $\alpha(1), \ldots, \alpha(d) \in \N_0^n$ and coefficients
$b_1, \ldots, b_d \in \C^*$ let 
$f = \sum_{j = 1}^d b_j \cdot Z^{\alpha(j)} \in \C[Z]$. 
For any given values of $\lambda_1, \ldots, \lambda_n$ set
\[
  \mu_{j} \ := \ \lambda^{\alpha(j)} 
  \ =  \ \lambda_1^{\alpha(j)_1} \cdots \lambda_n^{\alpha(j)_n}
 \, , \quad 1 \le j \le d \, .
\]

If the rank of the matrix with columns $\alpha(1), \ldots, \alpha(d)$
is $n$ (i.e., the vectors $\alpha(1), \ldots, \alpha(d)$ span $\R^n$)
then the $\lambda$-values can be reconstructed uniquely from
the $\mu$-values. We come up with the following variant of
a Nullstellensatz.

Here, let $I := \langle f_1,\ldots,f_r \rangle$ such that $f_i$ is of the form $\sum_{j = 1}^{d_i} b_{i j} Z^{\alpha(i, j)}$ with $\alpha(i,j) \in \N^n_0$. Let $m_{i j}$ be the monomial 
$m_{i j} = Z^{\alpha(i,j)} = Z_1^{\alpha(i,j)_1} \cdots Z_n^{\alpha(i,j)_n}$. We consider the ideal $I^* \subset \R[X,Y]$ 
generated by the polynomials
\begin{equation}
   \{ f_i^{\re}, f_i^{\im} \, : \, 1 \le i \le r \}
   \cup 
   \left\{ (m_{i j}^{\re})^2 + (m_{i j}^{\im})^2 - \mu_{ij}^2 \, : \,1 \leq j \leq d_i \, , 1 \leq i \leq r  \right\} \, , 
   \label{eq:istar}
\end{equation}
where $\mu_{ij} = \lambda^{\alpha(i,j)}$.

\begin{cor}
\label{co:monomialbasedcertificate}
Let $I := \langle f_1,\ldots,f_r \rangle$, and assume that the set $\bigcup_{i = 1}^r \bigcup_{j = 1}^{d_i} \{\alpha(i,j)\}$ spans $\R^n$. 
Either a point $\lambda \in (0,\infty)^n$ is contained in 
$\mathcal{U}_I$, or there exist polynomials $G \in I^* \subset \R[X,Y]$ 
and a sum of squares polynomial $H \in \R[X,Y]$ with
\begin{equation}
  \label{eq:monomialbasedcertificate}
  G + H + 1 \ = \ 0 \, .
\end{equation}
\end{cor}

We refer to these certificates as \textit{certificates in the monomial approach}.\\

For hypersurface amoebas of real polynomials, 
the membership problem relates to the following
statement of Niculescu and Putinar \cite{niculescu-putinar-2010}.
Let 
$p = p(X,Y) \in \R[X_1, \ldots, X_n, $ $Y_1, \ldots, Y_n]$ be a real polynomial.
Then $p$ can be written as a complex polynomial $p(X,Y) = P(Z, \overline{Z})$
with $P \in \C[Z_1 \ldots, Z_n, \bar{Z}_1, \ldots, \bar{Z}_n]$
and $\overline{P(Z,\overline{Z})} = P(Z,\bar{Z})$. Note that there
exists a polynomial $Q \in \C[Z_1, \ldots, Z_n]$ with
\[
  p(x,y)^2 \ = \ |P(z,\bar{z})|^2 \ = \ |Q(z)|^2 \, \ \text{ for }
  z \in T^n \, ,
\]
where $T := \{ z \in \C \, : \, |z| = 1 \}$.

The following statement can be obtained
by applying the Nullstellensatz on the set
$\{ z = (x,y) \, : \, |q(z)|^2 = 1, \, |z_1|^2 = 1, \ldots, |z_n|^2 = 1 \}$,
then applying Putinar's Theorem \cite{putinar-93} on the multiplier
polynomial of $|q(Z)|^2$ (see \cite{niculescu-putinar-2010}).

\begin{prop}
\label{pr:niculescuputinar}
Let $q \in \C[Z_1, \ldots, Z_n]$. Then $q(z) \neq 0$
for all $z \in T^n$ if and only if there are complex
polynomials $p_1, \ldots, p_k, r_1, \ldots, r_l \in \C[Z_1, \ldots, Z_n]$
with
\begin{equation}
\label{eq:niculescuputinar}
  1 + |p_1(z)|^2 + \cdots + |p_k(z)|^2 \ = \ 
  |q(z)|^2 (|r_1(z)|^2 + \cdots + |r_l(z)|^2) \, , \quad z \in T^n \, .
\end{equation}
\end{prop}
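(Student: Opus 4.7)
The plan is to split into two implications. The reverse ($\Leftarrow$) direction is immediate: if identity~\eqref{eq:niculescuputinar} holds on $T^n$, then the left-hand side is at least $1$ and hence strictly positive, so the right-hand side is also strictly positive, forcing $|q(z)|^2 > 0$ and therefore $q(z) \neq 0$ for every $z \in T^n$.

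For the forward ($\Rightarrow$) direction, assume $q(z) \neq 0$ on $T^n$. By compactness, $m := \min_{z \in T^n} |q(z)|^2$ is strictly positive. I would describe $T^n$ in real terms by the inequalities $1 - |z_k|^2 \geq 0$ and $|z_k|^2 - 1 \geq 0$ for $1 \leq k \leq n$; the associated quadratic module $M \subset \R[X,Y]$ is archimedean because it contains $\sum_k (1 - |z_k|^2) = n - \sum_k(X_k^2 + Y_k^2)$. Applying Putinar's Positivstellensatz to the strictly positive polynomial $|q|^2 - m/2$ on $T^n$ supplies a representation
\[
|q|^2 - \tfrac{m}{2} \;=\; \sigma_0 + \sum_{k=1}^n \sigma_k(1-|z_k|^2) + \sum_{k=1}^n \tau_k(|z_k|^2-1),
\]
with $\sigma_0, \sigma_k, \tau_k \in \R[X,Y]$ real sums of squares. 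Reducing modulo the ideal $\langle |z_k|^2 - 1 : 1 \leq k \leq n\rangle$ collapses this to $(2/m)\,|q(z)|^2 = 1 + (2/m)\,\sigma_0(z)$ for $z \in T^n$, which already has the shape of~\eqref{eq:niculescuputinar} except that $\sigma_0$ is a real SOS in $X,Y$ rather than a Hermitian SOS in $Z$.

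The main obstacle is then upgrading the real SOS $\sigma_0 = \sum_i h_i(X,Y)^2$ to a Hermitian SOS $\sum_i |p_i(Z)|^2$ modulo the ideal of $T^n$. Each real summand $h_i$ can be rewritten, via the substitutions $X_k = (Z_k + \bar Z_k)/2$ and $Y_k = (Z_k - \bar Z_k)/(2i)$, as $g_i(Z, \bar Z)$ for some $g_i \in \C[Z, \bar Z]$ which is real-valued on $T^n$, so $h_i^2 = |g_i|^2$. On $T^n$ one has $\bar Z_k = Z_k^{-1}$, so $g_i(Z, \bar Z)$ restricts to a Laurent polynomial in $Z$; multiplying by a monomial $Z^{\alpha_i}$ with sufficiently large exponents clears denominators and produces a genuine $p_i(Z) \in \C[Z]$. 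Because $|Z^{\alpha_i}| = 1$ on $T^n$, we have $|p_i(Z)|^2 = |g_i(Z,\bar Z)|^2 = h_i(X,Y)^2$ there. Summing over $i$ and realising the scalar multiplier $2/m$ on the left as $|r_1|^2$ with $r_1 \equiv \sqrt{2/m}$ (a constant complex polynomial) yields the identity~\eqref{eq:niculescuputinar}.
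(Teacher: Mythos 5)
Your argument is correct, and it gives a complete proof where the paper only sketches one. The reverse direction is handled the same way in both. For the forward direction, however, the paper's sketch proposes applying a Nullstellensatz to the set $\{\,|q(z)|^2 = 1,\ |z_1|^2=1,\ldots,|z_n|^2=1\,\}$ and then invoking Putinar's Theorem on the resulting polynomial multiplier of $|q(Z)|^2$ --- a two-stage argument. You instead apply Putinar's Positivstellensatz in a single shot to the strictly positive polynomial $|q|^2 - m/2$ over the archimedean description of $T^n$, then pass to the quotient by the torus relations and upgrade the real SOS $\sigma_0$ to a Hermitian SOS via the same $p(x,y)^2 = |P(z,\bar z)|^2 = |Q(z)|^2$ mechanism the paper records just before the proposition. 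Your route is more economical, and it delivers a small bonus: you obtain $l=1$ with $r_1$ the constant $\sqrt{2/m}$, i.e.\ the multiplier of $|q|^2$ on the right-hand side of \eqref{eq:niculescuputinar} can always be taken to be a positive scalar rather than a nontrivial Hermitian SOS. The Nullstellensatz-then-Putinar route sketched in the paper leaves a genuine polynomial multiplier in play and does not immediately yield this normalization.

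One small point worth making explicit if you write this up: the Laurent-polynomial clearing step needs $g_i(Z,Z^{-1})$ to be multiplied by a \emph{monomial}, not an arbitrary polynomial, precisely so that $|Z^{\alpha_i}|\equiv 1$ on $T^n$; you do say this, but it is the crux of why the construction preserves absolute values and deserves emphasis.
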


Note that the statement is not an identity of polynomials, but an 
identity for all $z$ in the $n$-\emph{torus} $T^n$.

While Proposition~\ref{pr:niculescuputinar}
provides a nice structural result, due to the following reasons we
prefer Corollary~\ref{co:amoebacertificate} for actual computations.
In representation~\eqref{eq:niculescuputinar}, \emph{two} sums of squares 
polynomials (rather than just one as in~\eqref{eq:amoebacertificate})
are needed in the representation, and the degree is increased (by the 
squaring process). Moreover, the theorem is not really a representation 
theorem (in terms of an identity of polynomials), but an identity
over $T^n$; therefore in order to express this computationally, the
polynomials hidden in this equivalence (i.e., the polynomials
$1-|Z_1|^2, \ldots, 1 - |Z_n|^2$) have to be additionally used.

\subsection*{SOS-based approximations\label{se:sos}}

By putting degree truncations on the certificates, we can transform
the theoretic statements into effective algorithmic procedures for
constructing certificates. The idea of degree truncations in
polynomial identities follows the same principles of the degree truncations
with various types of Nullstellen- and Positivstellens\"atze in
\cite{lasserre-2001,dlmm-2008,parrilo-2003}.
\iffullversion
It is instructive to have a look at two simple examples first.

\begin{ex}
Let $f$ be the polynomial $f = Z + z_0$ with a complex constant 
$z_0 = x_0 + i y_0$. The ideal $I'$ of interest is defined by
\begin{eqnarray*}
  h_1 & := & f^{\re} \ = \ X + x_0 \, , \\
  h_2 & := & f^{\im} \ = \ Y + y_0 \, , \\
  h_3 & := & X^2+Y^2-\lambda^2 \, .
\end{eqnarray*}
For values of $\lambda \ge 0$ which correspond to points outside the 
amoeba (i.e., $\lambda^2 \neq x_0^2+y_0^2$),
we have $\V_{\C}(I) = \emptyset$
and thus the Gr\"obner basis $G$ of $\langle h_1,h_2,h_3 \rangle$
is $G = \{1\}$. The corresponding multiplier polynomials $p_j$
to represent~1 as a linear combination $\sum_j p_j h_j$
are
\[
  p_1 \ = \ \frac{-X+x_0}{x_0^2+y_0^2-\lambda^2} \, , \quad
  p_2 \ = \ \frac{-Y+y_0}{x_0^2+y_0^2-\lambda^2} \, , \quad
  p_3 \ = \ \frac{1}{x_0^2+y_0^2-\lambda^2} \, .
\]
Hence,
in particular, $-1$ can be written as a sum of squares in the quotient
ring $\R[X] / I'$.
The necessary degree with regard to equation~\eqref{eq:amoebacertificate}
is just~2.

For $\lambda^2 = x_0^2+y_0^2$, the Gr\"obner basis (w.r.t.\ a lexicographic 
variable ordering with $X \succ Y$) is
\[
  X + x_0 , \; Y + y_0 \, .
\]
The point $(-x_0,-y_0)$ is contained in $\V_{\R}(I')$; thus in this case
there does not exist a Nullstellen-type certificate.
\end{ex}

\begin{ex}
Consider the polynomial $f = Z_1 + Z_2 + 5$ with $Z_j = X_j + i Y_j$.
The ideal $I'$ of interest is defined by
\begin{eqnarray*}
  h_1 := X_1 + X_2 + 5 \, , \ h_2 := Y_1 + Y_2 \, , \ h_3 := X_1^2+Y_1^2-\lambda_1^2 \, , \ h_4 := X_2^2+Y_2^2-\lambda_2^2 \, .
\end{eqnarray*}
Consider $\lambda_1 = 2$, $\lambda_2 = 3$.
Using a lexicographic ordering with
$X_1 \succ X_2 \succ Y_1 \succ Y_2$, a Gr\"obner basis is
\[
  Y_2^2 \, , \; Y_1 + Y_2 \, , \; X_2 + 3 \, , \; X_1 + 2 \, .
\]
The standard monomials are 1 and $Y_2$.
It is easy to see that $-1$ is not a sum of squares in the quotient ring, which reflects the fact that $(2,3) \in \cU_{I}$.

Consider now the choice $\lambda_1 = 1$ and $\lambda_2 = 2$. Using lexicographic ordering again, the Gr\"obner basis is
$$25 Y_2^2 + 96 \, , \; Y_1 + Y_2 \, , \; 5 X_2 + 14 \, , \; 5 X_1 + 11 \, .$$
Hence, $25 Y_2^2 \equiv -96 \mod I'$, which gives the sum of squares identity
\begin{eqnarray*}
	\left(\frac{5}{\sqrt{96}} Y_2\right)^2 & \equiv & -1 \mod I' \, ,
\end{eqnarray*}
and thus shows $(1,2) \notin \cU_I$.
\end{ex}

\else
See the full version of the paper for some instructive examples.
\fi

Using the degree truncation approach for sums of squares we can for a given ideal $I = \langle f_1,\ldots,f_r \rangle$ define
\begin{eqnarray*}
	C_t \ := \ \{\lambda \in (0,\infty)^n \setminus \cU_I & : & \text{there exists a certificate of the form } \eqref{Equ:StandardApproach}  \\
	& & \text{for } \lambda \text{ of degree } \leq 2t\}.
\end{eqnarray*}

Similarly, for coamoebas $\cC_I$ let $D_t$ be the subsets of its complement obtained by the degree truncation.
These sequences are the basis of the effective implementation (see Section~\ref{se:computing}).

\begin{thm} 
\label{th:boundeddegree}
Let $I = \langle f_1, \ldots, f_r \rangle$ and $t_0 := \max_j \lceil \deg f_j/2 \rceil$.
The sequence $(C_t)_{t \ge t_0}$ converges pointwise to the complement 
of the unlog
amoeba $\mathcal{U}_I$, and it is monotone increasing in the set-theoretic
sense, i.e. $C_t \subset C_{t+1}$ for $t \ge t_0$.

Similarly, the sequence $(D_t)_{t \ge t_0}$ converges pointwise to the complement 
of the coamoeba $\coamoeba_I$, and it is monotone increasing in the set-theoretic
sense, i.e. $D_t \subset D_{t+1}$ for $t \ge t_0$.
\end{thm}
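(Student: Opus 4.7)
The plan is to prove both monotonicity and pointwise convergence by directly invoking the Nullstellensatz-type results already established, namely Corollary~\ref{co:amoebacertificate} for the amoeba and the preceding coamoeba theorem. Monotonicity is essentially bookkeeping: an identity $G + H + 1 = 0$ witnessing $\lambda \in C_k$ with total degree at most $2k$ is trivially an identity of total degree at most $2(k+1)$, so $C_k \subseteq C_{k+1}$; the same argument yields $D_k \subseteq D_{k+1}$.

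For the amoeba convergence I would split the argument into soundness and completeness. Soundness (the inclusion $\bigcup_k C_k \subseteq \R^n \setminus \mathcal{U}_I$) follows from the sum-of-squares structure: if $\lambda \in \mathcal{U}_I$, pick a preimage $z = x + iy \in \V(I)$ with $|z_k| = \lambda_k$; then $(x,y) \in \V_{\R}(I')$, and evaluating any putative identity $G + H + 1 = 0$ at $(x,y)$ forces $H(x,y) = -1$, contradicting $H$ being a sum of squares. Completeness is precisely the content of Corollary~\ref{co:amoebacertificate}: any $\lambda \notin \mathcal{U}_I$ admits a certificate of some finite total degree $2k_\lambda$, so $\lambda \in C_{\max(k_\lambda, k_0)}$, and monotonicity retains it in every later $C_k$. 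Together these two directions yield pointwise convergence.

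The coamoeba case runs in parallel, with one additional reduction: the coamoeba Nullstellensatz stated above concerns only the origin $0 \in \T^n$, so for a general phase $\mu \in \T^n$ I would first apply the substitution $Z_j \mapsto Z_j e^{i\mu_j}$ of Lemma~\ref{le:normalization}(2). This transforms $I$ into an ideal whose (modified) coamoeba contains $0$ iff the coamoeba of $I$ contains $\mu$, and since the substitution preserves the degrees of the generators, it transfers certificates of degree at most $2k$ back and forth between the two problems. The soundness/completeness argument from the amoeba case then applies verbatim. The main subtlety I would flag is the distinction between $\mathcal{C}_I$ and $\mathcal{C}'_I$: the SOS approach certifies non-membership in the larger set $\mathcal{C}'_I$, so the natural limit of the $D_k$'s is $\T^n \setminus \mathcal{C}'_I$; since $\mathcal{C}'_I \setminus \mathcal{C}_I$ is lower-dimensional and contained in $\overline{\mathcal{C}_I}$, this does not alter the qualitative convergence statement but is the technical point one has to be careful with.
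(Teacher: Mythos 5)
Your proof is correct and takes essentially the same approach as the paper: invoke the Nullstellensatz corollaries to obtain a finite-degree certificate for each complement point, then use monotonicity of degree truncation. You are more explicit than the paper's terse proof, which omits the soundness direction and the reduction to the origin via Lemma~\ref{le:normalization}(2); your flag about $\mathcal{C}_I$ versus $\mathcal{C}'_I$ is also a genuine subtlety that the theorem statement glosses over, since the $D_k$ in fact converge to $\T^n\setminus\mathcal{C}'_I$ rather than $\T^n\setminus\mathcal{C}_I$.
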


\begin{proof} For any given point $z$ in the complement of the amoeba there 
exists a certificate of minimal degree, say $d$. For $t < \lceil d/2 \rceil$
the point $z$ is not contained in $C_t$ and for $t \ge \lceil d/2 \rceil$ the
point $z$ is contained in $C_t$. In particular, the relaxation process is
monotone increasing. And analogously for coamoebas.
\end{proof}

\begin{rem} A similar result holds for the monomial approach. Namely, for a given ideal 
$I = \langle f_1,\ldots,f_r \rangle$, $t_0 := \max_j \lceil \deg f_j/2 \rceil$, and
$I^*$ generated by the polynomials~\eqref{eq:istar}, the sets
\begin{eqnarray*}
	C_t^* \ := \ \{\lambda \in (0,\infty)^n \setminus \cU_I & : & \text{there exists a certificate of the form } G + H + 1 = 0 \\
	& & \text{with } G \in I^* \text{ and } H \text{ SOS for } \lambda \text{ of degree } \leq 2t\}
\end{eqnarray*}
$(t \ge t_0)$
converge pointwise to the complement of $\mathcal{U}_I$, and, set-theoretically,
this sequence is monotone increasing.
\end{rem}

It is well-known (and at the heart of current developments in optimization of
polynomial functions, see \cite{lasserre-2001,parrilo-2003} or the
survey \cite{laurent-2009-survey})
that SOS conditions with degree constraints of the form \eqref{Equ:StandardApproach} can be phrased 
as semidefinite programs.
Finding an (optimal) positive semidefinite matrix within an affine 
linear variety is known as semidefinite programming 
(see e.g.\ \cite{sdp-handbook} for a comprehensive treatment).
Semidefinite programs can be solved efficiently both in theory
and in practice.

Precisely, any sum-of-squares polynomial $H$ can be expressed as
$M Q M^T$, where $Q$ is a symmetric positive semidefinite matrix
(abbreviated $Q \succeq 0$) and $M$ is a vector of monomials.

Similarly, by the degree restriction the linear combination 
in~\eqref{eq:amoebacertificate}
or~\eqref{eq:monomialbasedcertificate}
can be integrated into the semidefinite formulation by a
comparison of coefficients.

\section{Special certificates\label{se:specialcertificates}}

For a certain class of amoebas, we can provide some explicit classes of 
Nullstellensatz-type certificates. As a first warmup-example, we
illustrate some ideas for constructing special certificates systematically
for linear amoebas in the standard approach.
Then we show how to construct special certificates for the
monomial-based approach.

In this section we concentrate on the case of hypersurface amoebas.

\subsection*{Linear amoebas in the standard approach}

Let $f = a Z_1 + b Z_2 + c$ be a general linear polynomial
in two variables with real coefficients $a,b,c \in \R$.
We consider certificates of the form~\eqref{eq:amoebacertificate}
based on the third binomial formula $(\alpha + \beta)(\alpha - \beta) = \alpha^2 - \beta^2$,
and we use the sums of squares $(X_1 - X_2)^2$ and $(Y_1 - Y_2)^2$.
For simplicity assume $a,b > 0$. Setting
\begin{eqnarray*}
  G_1 & := & (a X_1+b X_2-c)(a X_1+b X_2+c)
  +(a Y_1+b Y_2)(a Y_1+b Y_2) \\
     & & -(a^2+a b)(X_1^2+Y_1^2-\lambda_1^2)
  -(b^2+a b)(X_2^2+Y_2^2-\lambda_2^2) \, , \\
  H_1 & := &  a b (X_1-X_2)^2+ab(Y_1-Y_2)^2 
\end{eqnarray*}
the sum $G_1 + H_1$ simplifies via elementary cancellation to
\begin{eqnarray}
  \label{eq:linearcertificate1}
  \gamma_1 & := & (a^2+ab) \lambda_1^2 + (b^2+ab) \lambda_2^2 - c^2 \, .
\end{eqnarray}
Assume that the point $(\lambda_1,\lambda_2)$ is not contained in
the unlog amoeba $\mathcal{U}_f$.
In order to obtain the desired polynomial identity~\eqref{eq:amoebacertificate} 
certifying containment in the complement of $\mathcal{U}_f$, we require $\gamma_1$ to be negative. In that case we have
\begin{eqnarray*}
	\frac{1}{|\gamma_1|} \left(G_1 + H_1\right) + 1 & = & 0 \, ,
\end{eqnarray*}
which gives the polynomial identity~\eqref{eq:amoebacertificate}. Analogously, we obtain for
\begin{eqnarray*}
  G_2 & := & (-a X_1+b X_2+c)(a X_1+b X_2+c)
  +(-a Y_1+b Y_2)(a Y_1+b Y_2) \\
     & & a^2(X_1^2+Y_1^2-\lambda_1^2)
  -(b^2+bc)(X_2^2+Y_2^2-\lambda_2^2) \, , \\
  H_2 & := &  bc(X_2 - 1)^2 + bcY_2^2
\end{eqnarray*}
via elementary cancellation
\begin{eqnarray*}
	\gamma_2 & := & G_2 + H_2 \ = \ (b^2 + bc) \lambda_2^2 + c^2 + bc - a^2 \lambda_1^2 \, ,
\end{eqnarray*}
and, symmetrically,
\begin{eqnarray*}
	\gamma_3 & := & (a^2 + ac) \lambda_1^2 + c^2 + ac - b^2 \lambda_2^2 \, .
\end{eqnarray*}

\begin{ex}
Let $a=1$, $b=2$, $c=5$.
The curve (in $\lambda_1,\lambda_2$)
given by \eqref{eq:linearcertificate1} has
a logarithmic image that is shown in Figure~\ref{fi:firstpicture}.
Analogous special certificates can be
obtained within the two other complement components.
\end{ex}

\ifpictures
\begin{figure}[ht]
\[
    \includegraphics[scale=1]{./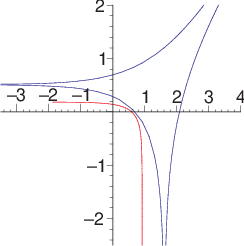}
\]

\caption{The boundary of an amoeba of a linear polynomial (blue) and the logarithmic image of the boundary 
of $R_1 = \{\lambda \in \R^2 \ : \ (a^2 + ab) \lambda_1^2 + (b^2+ab) \lambda_2^2 - c^2 < 0\}$, for which the special certificates of degree~2 exist (red).}
\label{fi:firstpicture}
\end{figure}
\fi

Since (by homogenizing the polynomial) there
is a symmetry, we obtain similarly an approximation of the other
two complement components. Hence we have:

\begin{lemma}
Let $f = aZ_1 + bZ_2 +c$ with $a,b,c \in \R$, and set
\begin{eqnarray*}
	R_1 & = & \{\lambda \in \R^2 \ : \ (a^2 + ab) \lambda_1^2 + (b^2+ab) \lambda_2^2 - c^2 < 0\} \, , \\
	R_2 & = & \{\lambda \in \R^2 \ : \ (b^2 + bc) \lambda_2^2 + c^2 + bc - a^2 \lambda_1^2 < 0\} \, , \\
	R_3 & = & \{\lambda \in \R^2 \ : \ (a^2 + ac) \lambda_1^2 + c^2 + ac - b^2 \lambda_2^2 < 0\} \, .
\end{eqnarray*}
Then $R_j \cap \cU_f = \emptyset$ for $1 \leq j \leq 3$, and for any $\lambda \in R_1 \cup R_2 \cup R_3$ there exists a certificate \eqref{Equ:StandardApproach} of degree at most two.
\end{lemma}

\subsection*{The monomial-based approach}

For the monomial-based approach based on Corollary~\ref{co:monomialbasedcertificate}
we can provide special certificates for a much more general class.
Our point of departure is Purbhoo's \textit{lopsidedness} criterion
\cite{purbhoo-2008} which guarantees that a point belongs to the 
complement of an amoeba $\mathcal{A}_f$.
In particular, we can
provide degree bounds for these certificates.

In the following let $\alpha(1), \ldots, \alpha(d) \in \N_0^n$ span $\R^n$
and $f = \sum_{j=1}^d b_j Z^{\alpha(j)} \in \C[Z]$ with monomials
$m_j := Z^{\alpha(j)} = Z_1^{\alpha(j)_1} \cdots Z_n^{\alpha(j)_n}$. 
For any point $v \in \R^n$ define $f\{v\}$ as 
the following sequence of numbers in $\R_{> 0}$,
\[
  f\{v\} \ := \ \left(|b_1 m_1(\Log^{-1}(v))|,\ldots,|b_d m_d(\Log^{-1}(v))|\right) \, .
\]
A list of positive real numbers is called \textit{lopsided} if one of the numbers is greater than the sum of all the others. We call a point 
$v \in \R^n$ \emph{lopsided}, if the sequence $f\{v\}$ is lopsided. 
Furthermore set
\[
  \mathcal{L}\mathcal{A}(f) \ := \ \{v \in \R^n \ : \ f\{v\} \text{ is not lopsided} \}.
\]
It is easy to see that $\mathcal{A}_f \subset \mathcal{L}\mathcal{A}(f)$ with $\mathcal{A}_f \neq \mathcal{L}\mathcal{A}(f)$ in general. 
In the following way the amoeba $\mathcal{A}_{f}$ can be approximated based on
the lopsidedness concept. For $r \ge 1$ let
\begin{eqnarray*}
	\tilde{f}_r(Z)	& :=	& \prod_{k_1 = 0}^{r-1} \cdots \prod_{k_n = 0}^{r-1} f\left(e^{2\pi i k_1 / r} Z_1,\ldots,e^{2\pi i k_n / r} Z_n\right)  \label{EquIterPolynLops}\\
				& =	& \Res \left(\Res \left(\ldots \Res(f(U_1 Z_1,\ldots,U_n Z_n),U_1^r - 1),\ldots, U_{n-1}^r - 1\right),U_n^r - 1\right) \, , \nonumber
\end{eqnarray*}
where $\Res$ denotes the resultant of two (univariate) polynomials.
Then the following theorem holds.
\begin{thm}\emph{(Purbhoo \cite[Theorem 1]{purbhoo-2008})}
\label{th:purbhoo}
\begin{enumerate}
	\item[(a)] For $r \to \infty$ the family $\mathcal{L}\mathcal{A}(\tilde{f}_r)$ converges uniformly to $\mathcal{A}_f$. There exists an integer $N$ such that to compute $\mathcal{A}_f$ within $\varepsilon > 0$, it suffices to compute $\mathcal{L}\mathcal{A}(\tilde{f}_r)$ for any $r \geq N$. Moreover, $N$ depends only on $\varepsilon$ and the Newton polytope (or degree) of $f$ and can be computed explicitly from these data.
	\item[(b)] For an ideal $I \subset \C[Z]$ a point $v \in \R^n$ is in the amoeba $\mathcal{A}_I$ if and only if $g\{v\}$ is not lopsided for every $g \in I$.
\end{enumerate}
\label{ThmPurb1}
\end{thm}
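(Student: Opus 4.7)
The plan is to handle part (b) first (which is more conceptual) and then part (a) (which is the quantitative heart of the statement). For (b) one direction is immediate: given $v \in \mathcal{A}_I$, pick $z$ with $\Log(z) = v$ and $z \in \V(I)$; for any $g = \sum_\alpha c_\alpha Z^\alpha \in I$, the identity $\sum_\alpha c_\alpha z^\alpha = 0$ implies by the triangle inequality that no single $|c_\alpha z^\alpha|$ exceeds the sum of the remaining ones, so $g\{v\}$ is not lopsided. The reverse direction assumes $v \notin \mathcal{A}_I$, i.e. the compact fiber $\Log^{-1}(v)$ is disjoint from $\V(I)$. By compactness one extracts finitely many $g_1,\ldots,g_s \in I$ covering the fiber in the sense that at each point some $g_i$ is nonzero; averaging appropriate products of the $g_i$ over the torus action $z \mapsto \omega \cdot z$ (where $|\omega_j| = 1$) then produces an element of $I$ whose value on all of $\Log^{-1}(v)$ dominates all but one of its monomial contributions, and hence is lopsided.

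For (a) the first observation is that the zero set of $\tilde{f}_r$ is the union $\bigcup_\omega \omega\cdot \V(f)$, where $\omega$ ranges over $n$-tuples of $r$-th roots of unity; since unimodular scalars preserve $\Log$-coordinates, $\mathcal{A}_{\tilde{f}_r} = \mathcal{A}_f$ for every $r$. Combined with the general containment $\mathcal{A}_g \subset \mathcal{L}\mathcal{A}(g)$ from (b), this yields $\mathcal{A}_f \subset \mathcal{L}\mathcal{A}(\tilde{f}_r)$ for all $r$. The nontrivial content is the reverse inclusion: if $\dist(v,\mathcal{A}_f) \ge \varepsilon$, then $\tilde{f}_r\{v\}$ becomes lopsided for all $r \ge N(\varepsilon, \New(f))$.

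The key analytic step compares the product
\begin{equation*}
  |\tilde{f}_r(z)| \ = \ \prod_\omega |f(\omega\cdot z)|
\end{equation*}
to the monomial expansion of $\tilde{f}_r$. Taking $r^n$-th roots and logarithms, one obtains a Riemann sum that converges as $r \to \infty$ to the Ronkin integral of $\log|f|$ over the compact torus of phases, a convex function of $v = \Log(z)$. Outside $\mathcal{A}_f$ this Ronkin function is affine linear with slope equal to the order $\nu(v) \in \Z^n$ and matches the logarithm of a single dominant monomial of $f$. Hence for large $r$ one monomial of $\tilde{f}_r$, namely the one whose exponent is the $r^n$-fold copy of this dominant exponent, dwarfs the sum of all other monomials of $\tilde{f}_r$; this is precisely lopsidedness of $\tilde{f}_r\{v\}$. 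The explicit bound $N(\varepsilon,\New(f))$ comes from an effective rate for the Riemann-sum/Ronkin approximation together with the polynomial-in-$r$ count of monomials of $\tilde{f}_r$.

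The principal obstacle is making this final step fully quantitative. One needs a uniform modulus of continuity of $\log|f|$ on the fiber $\Log^{-1}(v)$ depending only on $\varepsilon$ and on $\New(f)$, an explicit rate for the convergence of the Riemann sum to the Ronkin integral, and a bookkeeping estimate of how many monomials occur in $\tilde{f}_r$ and how large their coefficients can be. Combining these three effective estimates yields the dominance of a single monomial and hence the explicit $N(\varepsilon,\New(f))$ claimed in part (a).
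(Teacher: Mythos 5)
This theorem is stated in the paper as an external result, credited to Purbhoo \cite[Theorem 1]{purbhoo-2008}, and the paper offers no proof of it at all --- it is simply quoted and then used as a black box in the subsequent degree-bound arguments. There is therefore no ``paper's own proof'' to compare against; what you have written is a sketch of Purbhoo's theorem itself, which is a different task.

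As a sketch of Purbhoo's argument your proposal captures several correct high-level ideas: the easy direction of (b) via the triangle inequality, the identity $\mathcal{A}_{\tilde f_r} = \mathcal{A}_f$ (unimodular rotations preserve moduli), the resulting containment $\mathcal{A}_f \subset \mathcal{LA}(\tilde f_r)$, and the heuristic that $\frac{1}{r^n}\log|\tilde f_r|$ is a Riemann sum for the Ronkin function, which is affine with integer slope equal to the order $\nu$ in each complement component. But two of the steps as written would not go through. In part (b), the reverse direction is the real content: you claim that ``averaging appropriate products of the $g_i$ over the torus action \ldots\ produces an element of $I$ which is lopsided,'' but you give no mechanism for why a lopsided combination should exist. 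The compactness argument only yields finitely many $g_i$ with no common zero on the fiber, which is far from lopsidedness of any single element of $I$; Purbhoo's actual treatment is considerably more delicate and is essentially bootstrapped from the principal-ideal case via the cyclic-resultant construction. In part (a), the assertion that the limiting Ronkin value ``matches the logarithm of a single dominant monomial of $f$'' is not correct as stated --- the order $\nu$ of a complement component need not lie in $\supp(f)$ at all (only in $\New(f) \cap \Z^n$), and the affine piece of the Ronkin function involves a constant coming from the Laurent expansion of $1/f$, not $\log|c_\nu|$. What actually saturates is a monomial of $\tilde f_r$ of exponent $r^n\nu$ (cf.\ \cite[Proposition 4.1]{purbhoo-2008}, which the paper itself invokes in the proof of Corollary~\ref{co:certificate}), and showing that this monomial is present in $\tilde f_r$ with a sufficiently large coefficient, and that it dominates the remaining $O(r^n\deg f)^n$-many terms, is the heart of the quantitative bound --- which you flag as an obstacle rather than resolve. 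So the proposal is a reasonable outline of the intuition behind Purbhoo's theorem but is not a complete proof, and in any case it addresses a statement the paper deliberately imports rather than establishes.
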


Based on Theorem~\ref{th:purbhoo} one can devise a converging sequence of 
approximations for the amoeba.
Note, however, that the lopsidedness criterion is not a Nullstellensatz in a strict sense since 
it does not provide a polynomial identity certifying membership in the complement
of the amoeba.

The aim of this section is to figure out how our SOS approximation is related to 
the lopsidedness and transform the lopsidedness certificate
into a certificate for the Nullstellens\"atze presented in 
Section~\ref{se:nullstellensaetze}.

By Lemma~\ref{LemmaNormalform} we can assume that the point
$\lambda$, whose membership to an unlog amoeba $\cU_f$ shall be decided, is the all-1-vector {\bf 1}. 
In this situation lopsidedness means that there is an index
$j \in \{1, \ldots, d\}$ with $|b_j| > \sum_{i \neq j} |b_i|$.
If the lopsidedness condition is satisfied in $v$, then
the following statement provides a certificate of the form $G+H+1$ and
bounded degree.

Corresponding to the definition of $I^*$ in~\eqref{eq:istar},
let the polynomials $s_1, \ldots, s_{d+2}$ be defined by
\[
  s_i \ = \ \left(\frac{b_i^{\re}}{|b_i|} \cdot \left(Z^{\alpha(i)}\right)^{\re}\right)^2 + \left(\frac{b_i^{\im}}{|b_i|} \cdot \left(Z^{\alpha(i)}\right)^{\im}\right)^2 - 1 \, , \quad 1 \le i \le d \, ,
\]
and $s_{d+1} = f^{\re}$, $s_{d+2} = f^{\im}$.

\begin{thm} \label{th:certificateforlopsided}
If the point $\lambda = {\bf 1}$ is contained in the complement of 
$\mathcal{U}_f$ with $f\{0\}$ being lopsided with dominating element
$|m_1(\mathbf{1})|$, then there exists a certificate 
of (total) degree $2 \cdot \deg(f)$ which is given by
\begin{equation}
  \label{eq:certificateforlopsided}
  \sum_{i=1}^{d+2} s_i g_i + H + 1 \ = \ 0 \, ,
\end{equation}
where
\begin{eqnarray*}
  g_1	& = & |b_1|^2 \, , \quad g_i \ = \ - |b_i| \cdot \sum_{k = 2}^d |b_k| \, , \quad 2 \leq i \leq d\, , \\
  g_{d+1} & = & \left(-b_1 \cdot Z^{\alpha(1)} + \sum_{i = 2}^d b_i \cdot Z^{\alpha(i)}\right)^{\re} \, , \quad
  g_{d+2} \ = \ \left(-b_1 \cdot Z^{\alpha(1)} + \sum_{i = 2}^d b_i \cdot Z^{\alpha(i)}\right)^{\im} \, , \\
        H & = & \sum_{2 \le i < j \le d} |b_i| \cdot |b_j| \cdot \left(\frac{b_i^{\re}}{|b_i|} \cdot \left(Z^{\alpha(i)}\right)^{\re} - \frac{b_j^{\re}}{|b_j|} \cdot \left(Z^{\alpha(j)}\right)^{\re} \right)^2 \\
                      & & + |b_i| \cdot |b_j| \cdot \left(\frac{b_i^{\im}}{|b_i|} \cdot \left(Z^{\alpha(i)}\right)^{\im} - \frac{b_j^{\im}}{|b_j|} \cdot \left(Z^{\alpha(j)}\right)^{\im} \right)^2 \, . 
\end{eqnarray*}
\label{ThmCertLopsidedness}
\end{thm}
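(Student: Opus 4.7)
The plan is to verify the polynomial identity \eqref{eq:certificateforlopsided} by direct expansion and then read off the three structural properties required by Corollary \ref{co:monomialbasedcertificate}: ideal membership of $G := \sum_{i=1}^{d+2} g_i s_i$, the sum-of-squares form of $H$, and the total-degree bound $2\tdeg(f)$.

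I would first apply Lemma \ref{le:normalization} to assume $\lambda = \mathbf{1}$, so that every $\mu_j = 1$ and the lopsidedness hypothesis at $v=0$ reads $|b_1| > \sum_{i \ge 2}|b_i|$. Under this normalization each $s_i$ for $1 \le i \le d$ is (a rewriting of) the generator $|m_i|^2 - 1$ of $I^*$, while $s_{d+1} = f^{\re}$ and $s_{d+2} = f^{\im}$ are also among the generators; hence $G \in I^*$ once the identity is established. The polynomial $H$ is visibly a nonnegative combination of sums of two real squares and is therefore a sum of squares.

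For the main calculation I would split the left-hand side of \eqref{eq:certificateforlopsided} into $A := \sum_{i=1}^d g_i s_i$, $B := g_{d+1} s_{d+1} + g_{d+2} s_{d+2}$, and $H$. Substituting $s_i = |m_i|^2 - 1$ together with the explicit scalars $g_i$ gives $A = |b_1|^2|m_1|^2 - |b_1|^2 + \big(\sum_{k\ge 2}|b_k|\big)^2 - \big(\sum_{k\ge 2}|b_k|\big)\sum_{i \ge 2}|b_i||m_i|^2$; the crucial observation $g_{d+1} + i g_{d+2} = f - 2 b_1 m_1$ yields $B = \re\bigl(\bar f(f - 2b_1 m_1)\bigr) = |f|^2 - 2\re(b_1 m_1 \bar f)$; and expanding $H$ produces $\big(\sum_{k \ge 2}|b_k|\big)\sum_{i \ge 2}|b_i||m_i|^2 - \sum_{i \ge 2}|b_i|^2|m_i|^2 - 2\sum_{2\le i<j}\re(b_i \bar b_j m_i \bar m_j)$. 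The cross terms $\re(b_i \bar b_j m_i \bar m_j)$ of $|f|^2$ are annihilated by the cross part of $H$ for $2 \le i < j$ and by $-2\re(b_1 m_1 \bar f)$ for the pairs involving index $1$; the $|b_1|^2|m_1|^2$ appearing in $A$ cancels against the combined contributions of $|f|^2$ and $-2\re(b_1 m_1 \bar f)$; the diagonals $|b_i|^2|m_i|^2$ for $i \ge 2$ in $|f|^2$ cancel with the matching terms of $H$; and the mixed terms linear in $|m_i|^2$ appearing in $A$ and $H$ annihilate. What remains is the pure constant $-|b_1|^2 + \big(\sum_{k \ge 2}|b_k|\big)^2$, which is strictly negative by lopsidedness. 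Dividing the identity through by the positive number $|b_1|^2 - \big(\sum_{k \ge 2}|b_k|\big)^2$ rescales the constants $g_i$ and the SOS polynomial $H$ without affecting ideal membership, the SOS property, or the total-degree bound, and yields $G + H + 1 = 0$ in the required form.

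The degree bound follows at a glance: each $s_i$ with $i \le d$ has degree $2\tdeg(f)$ and is multiplied by a scalar; $s_{d+1}$ and $s_{d+2}$ have degree $\tdeg(f)$ and are multiplied by $g_{d+1}, g_{d+2}$ of degree $\tdeg(f)$; and every summand of $H$ is the square of a polynomial of degree $\tdeg(f)$. The main obstacle is the combinatorial bookkeeping among the $O(d^2)$ cross terms indexed by pairs $(i,j)$, namely checking that the contributions from $A$, $B$, and $H$ simultaneously cancel every non-constant monomial. The structural reason this cancellation succeeds is that $H$ encodes precisely the slack in the triangle inequality $|b_1 m_1| = |{-}\sum_{i \ge 2} b_i m_i| \le \sum_{i \ge 2}|b_i||m_i|$ on the torus $|m_i|=1$, which is exactly the inequality underlying Purbhoo's lopsidedness criterion.
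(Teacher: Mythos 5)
Your proposal is correct and follows essentially the same direct-expansion route as the paper's proof: decompose $\sum_{i=1}^{d+2}g_is_i + H$, expand using the third binomial formula for the $g_{d+1}s_{d+1}+g_{d+2}s_{d+2}$ block, and observe that all non-constant terms cancel, leaving $-|b_1|^2+\big(\sum_{i\ge 2}|b_i|\big)^2$, which is strictly negative by lopsidedness and then rescaled to $-1$. What you do differently is recast the bookkeeping in complex arithmetic: your observations $g_{d+1}+ig_{d+2}=f-2b_1m_1$ and hence $B=|f|^2-2\re(b_1m_1\bar f)$, and the weighted Lagrange identity $\big(\sum_{i\ge 2}|b_i|\big)\sum_{i\ge 2}|b_i||m_i|^2-\big|\sum_{i\ge 2}b_im_i\big|^2=H$, make the cancellations transparent, whereas the paper carries out the same computation entirely in $\re/\im$ components. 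Both approaches buy the same result; yours is cleaner to verify and exposes the triangle-inequality slack underlying $H$. One point worth flagging explicitly, which you pass over with the parenthetical ``(a rewriting of)'': the theorem's printed $s_i$, $g_{d+1}$, $g_{d+2}$ and $H$ use expressions of the form $\frac{b_i^{\re}}{|b_i|}\cdot(Z^{\alpha(i)})^{\re}$, which are not literally equal to $\frac{(b_i\cdot Z^{\alpha(i)})^{\re}}{|b_i|}$; only the latter reading gives $s_i=|m_i|^2-1$, makes $G$ lie in $I^*$, and makes the identity close. Your computation silently uses the corrected reading (as does the paper's own proof), so the argument is sound, but it would strengthen the write-up to state that correction rather than leave it implicit.
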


\begin{proof} By the third binomial formula $(\alpha+\beta) \cdot (\alpha-\beta) = \alpha^2 - \beta^2$, substituting the polynomials $s_i$ and $g_j$ into 
$s_{d+1} g_{d+1} + s_{d+2} g_{d+2}$ yields
\[
  - \left(b_1^{\re} \cdot \left(Z^{\alpha(1)}\right)^{\re}\right)^2 + \left( \sum_{i = 2}^d (b_i \cdot Z^{\alpha(i)})^{\re} \right)^2
  - \left(b_1^{\im} \cdot \left(Z^{\alpha(1)}\right)^{\im}\right)^2 + \left( \sum_{i = 2}^d (b_i \cdot Z^{\alpha(i)})^{\im} \right)^2 \, . 
\]
Adding $g_1 s_1$ and the SOS term $H$ yields
$$
	-|b_1|^2 + \left(\sum_{j = 2}^d \left(\frac{b_j^{\re}}{|b_j|} \cdot \left(Z^{\alpha(j)}\right)^{\re}\right)^2 + \left(\frac{b_j^{\im}}{|b_j|} \cdot \left(Z^{\alpha(j)}\right)^{\im}\right)^2\right) \cdot \left(|b_j| \cdot \sum_{k = 2}^d |b_k|\right).
$$
Hence, the expression $\sum_{i=1}^{d+2} s_i g_i + H$ 
in~\eqref{eq:certificateforlopsided} in total results in 
\[
 - |b_1|^2 + \left(\sum_{i = 2}^d |b_i|\right)^2 \, .
\]
Since all $|b_i| \geq 0$ and by our assumption of
lopsidedness with dominating term $|m_1(\mathbf{1})|$,
this is the certificate we wanted to obtain.
By rescaling, we can bring the constant to $-1$.
\end{proof}

We say that there exists a certificate for a point $w$ in the complement
of the (log) amoeba $\mathcal{A}_f$ if there exists a certificate
for the point $\mathbf{1}$ in the complement of the amoeba $\mathcal{U}_g$
in the sense of Theorem~\ref{th:certificateforlopsided}, where
$g$ is defined as in Lemma~\ref{le:normalization} 
and $\lambda_i := |\log^{-1}(w_i)|$.

\begin{cor} \label{co:certificate} Let $r \in \N$. 
\begin{enumerate}
\item For any $w \in \R^n \setminus \mathcal{L}\mathcal{A}(\tilde f_r) \subset \R^n \setminus \mathcal{A}_f$ there exists a certificate
of degree at most $2 \cdot r^n \cdot \deg(f)$ which can be computed explicitly.
\item The certificate determines the order of the complement component 
to which $w$ belongs.
\end{enumerate}
\end{cor}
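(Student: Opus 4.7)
The plan is to apply Theorem~\ref{th:certificateforlopsided} to the iterated polynomial $\tilde{f}_r$ rather than to $f$ itself. First, by Lemma~\ref{le:normalization}(1) we may normalize so that the point of interest corresponds to $\lambda = \mathbf{1}$, by absorbing the shift into the coefficients of $f$ and hence of $\tilde{f}_r$. Under this normalization, the hypothesis $w \in \R^n \setminus \mathcal{L}\mathcal{A}(\tilde{f}_r)$ is precisely the statement that the sequence $\tilde{f}_r\{0\}$ of absolute values of monomial contributions of $\tilde{f}_r$ at $\mathbf{1}$ is lopsided, which is exactly the input required for Theorem~\ref{th:certificateforlopsided}.

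For part~(1), I would apply Theorem~\ref{th:certificateforlopsided} to $\tilde{f}_r$. Since $\tilde{f}_r$ is a product of $r^n$ rotated copies $f(e^{2\pi i k_1/r} Z_1, \ldots, e^{2\pi i k_n/r} Z_n)$ of $f$, its total degree satisfies $\deg(\tilde{f}_r) \leq r^n \cdot \deg(f)$. Theorem~\ref{th:certificateforlopsided} then produces a certificate of the form $\sum s_i g_i + H + 1 = 0$ of degree at most $2 \cdot \deg(\tilde{f}_r) \leq 2 \cdot r^n \cdot \deg(f)$. The certificate witnesses $\mathbf{1} \notin \mathcal{U}_{\tilde{f}_r}$; but coordinatewise rotation by a root of unity preserves absolute values, so $\mathcal{A}_{\tilde{f}_r} = \mathcal{A}_f$, and the certificate therefore equally proves $w \notin \mathcal{A}_f$. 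Since the polynomials $s_i$, $g_i$, and $H$ in Theorem~\ref{th:certificateforlopsided} are given by explicit formulas in the coefficients of $\tilde{f}_r$, and $\tilde{f}_r$ itself is computable via iterated resultants, the certificate is constructed explicitly.

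For part~(2), the dominating monomial of the lopsided sequence $\tilde{f}_r\{0\}$, say $m_1$ with exponent $\beta \in r^n \cdot \New(f)$, is uniquely distinguished in the construction of Theorem~\ref{th:certificateforlopsided} by being paired with the positive coefficient $g_1 = |b_1|^2$, while all $g_i$ for $i \geq 2$ are nonpositive. By the structural content of Theorem~\ref{th:purbhoo} (see \cite{purbhoo-2008}), the rescaled exponent $\beta / r^n$ is a lattice point in $\New(f) \cap \Z^n$ equal to the order of the complement component of $\mathcal{A}_f$ containing $w$. Hence the order is read off directly from the certificate by inspecting which monomial occupies the distinguished role.

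The step I expect to require the most care is verifying that the hypotheses of Theorem~\ref{th:certificateforlopsided} genuinely transfer from $f$ to $\tilde{f}_r$: one must confirm that the monomial exponents appearing in $\tilde{f}_r$ still span $\R^n$ (inherited from those of $f$), that no coefficient of the dominating monomial vanishes after the resultant combination, and that the degree bound $2 \cdot \deg(\tilde{f}_r)$ from Theorem~\ref{th:certificateforlopsided} is tight enough to yield the stated bound $2 \cdot r^n \cdot \deg(f)$. Once this bookkeeping is in place, both (1) and (2) follow by combining Theorem~\ref{th:certificateforlopsided} with Theorem~\ref{th:purbhoo}.
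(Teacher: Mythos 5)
Your proof is correct and follows essentially the same route as the paper: normalize via Lemma~\ref{le:normalization}(1), apply Theorem~\ref{th:certificateforlopsided} to the iterated polynomial $\tilde{g}_r$ (which you denote $\tilde f_r$ after absorbing the shift), read off the degree bound from $\tdeg(\tilde g_r) = r^n\tdeg(g) = r^n\tdeg(f)$, and invoke Purbhoo's Proposition~4.1 to recover the order from the exponent $r^n\alpha(i)$ of the dominating term.
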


\begin{proof}
By definition of $g$, we have $w \in \mathcal{A}_f$ if and only
if $\mathbf{1} \in \mathcal{U}_g$. Further $\mathbf{1}$ belongs to 
$\mathcal{L}\mathcal{A}(\tilde{g}_r)$ if and only if 
$\tilde{g}_r\{0\}$ is not lopsided. Applying 
Theorem~\ref{ThmCertLopsidedness} on the function $\tilde{g}_r$
yields a certificate for $w$ in the log amoeba $\mathcal{A}_f$.
Since we have $\tdeg(\tilde{g}_r) = \tdeg(g) \cdot r^n =
\tdeg(f) \cdot r^n$ due to the definition of $\tilde g_r$ and
$g$ the result follows.

For the second statement, note that passing over from $f$ to
$g$ does not change the order $\nu$ of any point in the complement
of the amoebas. Now it suffices to show that the dominating term
(which occurs in a distinguished way in the certificate)
determines the order of the complement component. The latter
statement follows from Purbhoo's result that if 
$w \not\in \mathcal{L}\mathcal{A}(\hat{f}_r)$ and the order of 
the complement component $w$ belongs to is $\alpha(i)$ then 
the dominant term in $\tilde{f}_r$ has the exponent 
$r^n \cdot \alpha(i)$ (see \cite[Proposition 4.1]{purbhoo-2008}).
\end{proof}

\begin{cor} \label{th:hyperplaneamoebas}
For linear hyperplane amoebas in $\R^n$, any point in the complement
of the amoeba has a certificate whose sum of squares is a sum of
squares of affine functions.
\end{cor}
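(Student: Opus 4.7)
The plan is to reduce to the lopsidedness certificate of Theorem~\ref{th:certificateforlopsided} via Corollary~\ref{co:certificate} with the parameter $r=1$, and then examine the structure of the sum-of-squares term $H$ in the linear case.

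First, fix a linear polynomial $f = \sum_{i=1}^{d} b_i Z^{\alpha(i)}$ with $\alpha(1)=0$ and $\{\alpha(2),\dots,\alpha(d)\} \subset \{e_1,\dots,e_n\}$, i.e., every monomial $Z^{\alpha(i)}$ is either a constant or a single variable $Z_k = X_k + i Y_k$. Given a point $w$ in the complement of $\mathcal{A}_f$, Lemma~\ref{le:normalization}(1) lets us replace $f$ by $g(Z) = f(\lambda_1 Z_1,\dots,\lambda_n Z_n)$, where $\lambda = \exp(w)$, and consider instead the point $\mathbf{1}$ in the complement of $\mathcal{U}_g$. Since rescaling the variables of a linear polynomial preserves linearity, the monomials of $g$ are again constants or single variables.

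The next step is the observation that for any linear polynomial the lopsidedness criterion is tight, i.e., $\mathcal{A}_g = \mathcal{L}\mathcal{A}(g)$, so in the notation of Corollary~\ref{co:certificate} one may take $r=1$ and $\tilde{g}_1 = g$. Indeed, $\mathbf{1} \in \mathcal{U}_g$ is equivalent to the existence of phases $\theta_i \in \T$ such that $\sum_{i=1}^{d} |b_i| e^{i\theta_i} = 0$, and by elementary triangle-inequality reasoning such phases exist precisely when no single $|b_i|$ exceeds the sum of the others, i.e., when $g\{\mathbf{0}\}$ is not lopsided. Hence $\mathbf{1} \notin \mathcal{U}_g$ forces $g\{\mathbf{0}\}$ to be lopsided, and Theorem~\ref{th:certificateforlopsided} directly produces a certificate of the form $\sum_i s_i g_i + H + 1 = 0$.

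It remains to inspect $H$. By the explicit formula in Theorem~\ref{th:certificateforlopsided}, $H$ is a nonnegative combination of squares of the expressions
\[
  \frac{b_i^{\re}}{|b_i|}(Z^{\alpha(i)})^{\re} - \frac{b_j^{\re}}{|b_j|}(Z^{\alpha(j)})^{\re}
  \quad\text{and}\quad
  \frac{b_i^{\im}}{|b_i|}(Z^{\alpha(i)})^{\im} - \frac{b_j^{\im}}{|b_j|}(Z^{\alpha(j)})^{\im}.
\]
Because each monomial $Z^{\alpha(i)}$ is either a constant or some $Z_k$, each $(Z^{\alpha(i)})^{\re}$ and $(Z^{\alpha(i)})^{\im}$ is an affine function of $(X,Y)$; the expressions above are therefore affine in $(X,Y)$, making $H$ a sum of squares of affine functions, which is the claim.

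The only point one needs to verify with any care is the tightness of the lopsidedness criterion for linear polynomials (the $r=1$ statement above), since Corollary~\ref{co:certificate} only guarantees a certificate of degree $2 r^n \deg(f)$, and a larger $r$ would square the underlying monomials and destroy affineness. Once this is settled by the triangle-inequality argument, the rest is a direct inspection of the formula.
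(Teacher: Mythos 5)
Your proposal is correct and follows essentially the same route as the paper's proof: the paper likewise reduces the claim to the fact that every point in the complement of a linear hyperplane amoeba is lopsided (citing the explicit characterization of linear amoebas in Forsberg--Passare--Tsikh~\cite{fpt-2000}) and then invokes Theorem~\ref{th:certificateforlopsided}. You additionally supply a self-contained triangle-inequality derivation of the lopsidedness characterization and spell out why the $H$ from Theorem~\ref{th:certificateforlopsided} is a sum of squares of affine forms in the linear case --- both of which the paper leaves implicit --- but the underlying argument is the same.
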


\begin{proof}
By the explicit characterization of linear hyperplane amoebas 
in~\cite[Corollary 4.3]{fpt-2000}, any point
in the complement is lopsided.
Hence, the statement follows from Theorem~\ref{th:certificateforlopsided}.
\end{proof}

\subsection*{Simplified expressions}

 From a slightly more general point of view, the monomial-based certificates
can be seen as a special case of the following construction.
Whenever the defining polynomials of a variety originate from
simpler polynomials with algebraically independent monomials,
then the approximation of the amoeba can be simplified. 

For an ideal $I$ let $V := \V(I) \subset (\C^*)^n$ be its
subvariety in $(\C^*)^n$.
Let $\gamma_1,\ldots,\gamma_k$ be $k$ monomials in $n$ variables,
say, $\gamma_i = Y^{\alpha{(i)}} = Y_1^{\alpha{(i)}_{1}} Y_2^{\alpha{(i)}_{2}}  \cdots  Y_n^{\alpha{(i)}_{n}} $,
where $\alpha{(i)} = (\alpha{(i)}_{1} ,\ldots, \alpha{(i)}_{n} ) \in \Z^n$.
They define a homomorphism $\gamma$ of algebraic groups
 from $(\C^*)^n$ to $(\C^*)^k$.
For any subvariety $W$ of $(\C^*)^k$, the inverse image 
$\gamma^{-1}(W)$ is a subvariety of $(\C^*)^n$. 
Note that the map $\gamma$ is onto if and only if the
vectors $\alpha{(1)}, \ldots,\alpha{(k)}$ are linearly independent
(see \cite[Lemma 4.1]{theobald-expmath-2002}).

Let $J$ be an ideal with $\V(J) = \gamma^{-1}(V)$.
If the map $\gamma$ is onto, then computing the amoeba 
of $J$ can be reduced to the computation of the amoeba of $I$.
Let $\gamma'$ denote the restriction of $\gamma$ to the multiplicative
subgroup $(0,\infty)^n$. Then the following diagram is
a commutative diagram of multiplicative abelian groups:
\[
\begin{matrix}
(\C^*)^n & \buildrel \gamma \over  \longrightarrow & (\C^*)^k \\
\downarrow & & \downarrow \\
(0,\infty)^n &  \buildrel \gamma' \over \longrightarrow & (0,\infty)^k \\
\end{matrix}
\]
where the  vertical maps are taking coordinate-wise absolute value.
For vectors $p = (p_1,\ldots,p_n)$ in $ (\C^*)^n$
we write $\,|p| = (|p_1|, \ldots, |p_n|) \in (0,\infty)^n$,
and similarly for vectors of length $k$. Further, for
$V \subset (\C^*)^n$ let $|V| := \{ |p| \, : \, p \in V\}$.
If the map $\gamma$ is onto then $|\gamma^{-1}(V)| = \gamma'^{-1}(|V|)$
(see \cite{theobald-expmath-2002}).

\begin{thm} \label{th:transfercertificates}
If a point outside of an unlog amoeba $\mathcal{U}_I$ has a certificate
of total degree $d$ then a point outside of the unlog $\mathcal{U}_J$
has a certificate of degree $d \cdot D$, where $D$ is the maximal
total degree of the monomials $\gamma_1, \ldots, \gamma_k$.
\end{thm}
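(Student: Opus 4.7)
The plan is to obtain the desired certificate for $\mathcal{U}_J$ from the one for $\mathcal{U}_I$ by direct pullback of the sum-of-squares identity along $\gamma$. First I fix the point correspondence. Let $\lambda \in (0,\infty)^n$ lie outside $\mathcal{U}_J$; using the identity $|\gamma^{-1}(V)| = \gamma'^{-1}(|V|)$ recorded just above the statement, the image $\mu := \gamma'(\lambda) \in (0,\infty)^k$ lies outside $\mathcal{U}_I$, so by hypothesis $\mu$ admits a certificate
\[
  \sum_{j=1}^r p_j\, f_j^{\re} + \sum_{j=1}^r p'_j\, f_j^{\im} + \sum_{i=1}^k q_i\, (X_i^2 + Y_i^2 - \mu_i^2) + H + 1 \;=\; 0
\]
in $\R[X_1,\ldots,X_k,Y_1,\ldots,Y_k]$ of total degree at most $d$, with $H$ a sum of squares.

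The second step is the pullback itself. Writing the source variables as $W_l = U_l + iV_l$, and assuming $\alpha(i) \in \N_0^n$ (the Laurent case reduces to this by multiplying the defining polynomials of $J$ by a suitable denominator-clearing monomial), I substitute $X_i \mapsto \re\,\gamma_i(W)$ and $Y_i \mapsto \im\,\gamma_i(W)$; both lie in $\R[U,V]$ and have total degree at most $D$. Since the substitution sends $X_i + iY_i$ to $\gamma_i(W)$, the pieces $f_j^{\re}$ and $f_j^{\im}$ pull back exactly to $(f_j \circ \gamma)^{\re}$ and $(f_j \circ \gamma)^{\im}$, i.e.\ into the real ideal $J'$ attached to $J$. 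The sphere generator $X_i^2 + Y_i^2 - \mu_i^2$ pulls back to $|\gamma_i(W)|^2 - |\gamma_i(\lambda)|^2 = \prod_l (U_l^2 + V_l^2)^{\alpha(i)_l} - \prod_l \lambda_l^{2\alpha(i)_l}$, which by iterating the standard telescoping identity $a^m - b^m = (a-b)\sum_{s=0}^{m-1} a^{m-1-s} b^s$ over all coordinates $l$ lies in $\langle U_l^2 + V_l^2 - \lambda_l^2 : 1 \le l \le n \rangle$ with multipliers of degree at most $2(D-1)$. The term $H$ pulls back to a sum of squares of substituted polynomials, and the constant $1$ is unchanged, producing the desired Nullstellensatz-style identity for $\lambda$ outside $\mathcal{U}_J$.

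The third and final step, and the only genuine bookkeeping concern, is the degree count. A monomial of degree $e$ in the $X,Y$ variables becomes a polynomial of degree at most $De$ in the $U,V$ variables, so the terms $p_j f_j^{\re}$ and $p'_j f_j^{\im}$ contribute degree at most $dD$, and the pulled back $H$ is a sum of squares of degree at most $dD$. For the sphere terms, a multiplier $q_i$ of original degree at most $d-2$ pulls back to degree at most $D(d-2)$; combined with the telescoping multipliers of degree at most $2(D-1)$ and the degree-$2$ generators $U_l^2 + V_l^2 - \lambda_l^2$, the bound $D(d-2) + 2(D-1) + 2 = dD$ is preserved. Assembling the pieces yields a certificate for $\lambda$ outside $\mathcal{U}_J$ of total degree at most $d \cdot D$, as claimed.
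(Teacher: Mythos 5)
Your proof is correct and follows the same strategy as the paper's: realify the monomial map $\gamma$ and pull the degree-$d$ certificate for $\mathcal{U}_I$ back along the induced real substitution $X_i \mapsto \re\,\gamma_i(W)$, $Y_i \mapsto \im\,\gamma_i(W)$. You are more careful than the paper in two places where its one-line remark (``the realification process carries over to the substitution process'') is silent: you verify via the telescoping identity that the pulled-back circle constraints $|\gamma_i(W)|^2-\mu_i^2$ actually lie in $\langle U_l^2+V_l^2-\lambda_l^2 : 1\le l\le n\rangle$ with multipliers of controlled degree, and you carry out the degree bookkeeping term by term to confirm the bound $d\cdot D$, neither of which the paper spells out.
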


In particular, this statement applies to the certificates from
Statements \ref{co:amoebacertificate} and \ref{co:monomialbasedcertificate}.
\iffullversion

\begin{proof}
Let $p$ be a point outside of the unlog amoeba of $V \subset (\C^*)^n$ 
which has a certificate of total degree $d$. By 
Corollary~\eqref{eq:amoebacertificate},
the certificate consists of a polynomial $G(X,Y)$ in the real ideal
$I' \subset \R[X,Y]$ from~\eqref{eq:iprime}
and by real sums of squares of polynomials in $\R[X,Y]$. 
For the polynomials in the ideals, we observe that
the realification process carries over to the substitution
process. W.l.o.g.\ we can assume that $\gamma_i$ is 
a product of just two factors. Then, with $Z = Z_1 + i Z_2$, $Z = PQ$
we have
$Z_1 + i Z_2 = (P_1 + i P_2)(Q_1 + i Q_2)$ and use the
real substitutions
$Z_1 \equiv P_1 Q_1 - P_2 Q_2$,
$Z_2 \equiv P_1 Q_2 + P_2 Q_1$.
And in the same way the real sum of squares remain real sums of 
squares (of the polynomials in $P_i$, $Q_j$) after substituting. 
\end{proof}

\begin{example}
Let $\mathbb{G}_{1,3}$ denote the Grassmannian
of lines in 3-space, which is the variety in $\P_{\C}^5$, 
defined by
\[
  P_{01} P_{23} \, - \,
 P_{02} P_{13} \, + \,
 P_{03} P_{12} \, = \, 0 \, ,
\]
which we consider as a subvariety of $(\C^*)^6$.
The three terms in this quadratic equation involve distinct variables and
hence correspond to linearly independent exponent vectors. Note that
$\mathbb{G}_{1,3}$ equals $\,\gamma^{-1}(V)\,$ where
\[
\gamma : (\C^*)^6 \, \rightarrow \, (\C^*)^3 \, , \,\,
(p_{01},p_{02},p_{03},p_{12},p_{13},p_{23}) \,\,\mapsto \,
( p_{01} p_{23}, \,
 p_{02} p_{13} ,\,
 p_{03} p_{12} )
\]
and $V$ denotes the plane in $3$-space defined by the linear equation
$X - Y + Z =  0$.
Since by Theorem~\ref{th:hyperplaneamoebas} any point in the complement
has a certificate of degree~2, 
Theorem~\ref{th:transfercertificates} implies that every point in the
complement of the Grassmannian amoeba has a certificate of degree~4.
\end{example}
\else
For the proof and how to apply this theorem on the Grassmannian of lines see the full
version of the paper.
\fi

\section{Computing the relaxations via semidefinite programming\label{se:computing}}

We close the paper by providing some computational results in order
to confirm the validity of our approach. The subsequent computations 
have been performed on top of {\sc SOSTools} \cite{sostools}
which is a {\sc Matlab} package for computing sums of squares based
computations. The SDP package underlying {\sc SOSTools} is {\sc SeDuMi} \cite{sedumi}.

\begin{figure}
\[
  \includegraphics[scale=0.55]{./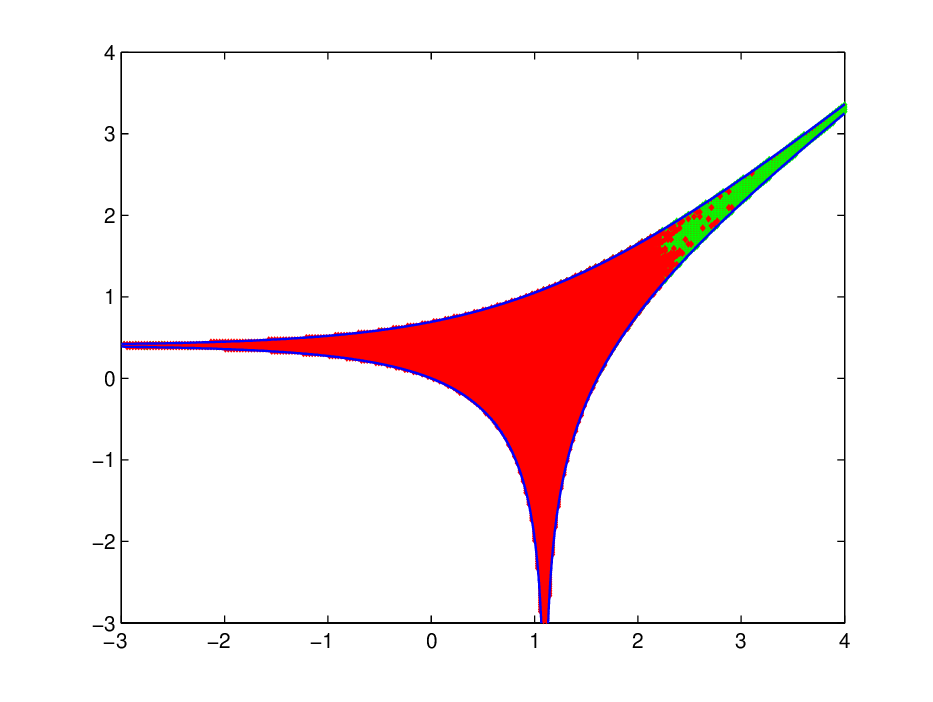}
\]
	\caption{SOS certificates of linear amoebas restricted to degree two. The
  red (dark) points represent infeasible SDPs, and in the green (light) points numerical instabilities 
  were reported in the computations.}
	\label{FigureLinearVerification}
\end{figure}

\begin{example}
For the test case of a linear polynomial $f := Z_1 + 2Z_2 + 3$,
the boundary contour of the amoeba $\mathcal{A}_f$ can be explicitly
described, and it is given by the curves
\begin{eqnarray*}
		\exp(z_1)	& =	& 2\cdot \exp(z_2) + 3 \, , \\
	2 \cdot \exp(z_2)	& =	& \exp(z_1) + 3 \, , \\
		3	& =	& \exp(z_1) + 2 \cdot \exp(z_2) \, ,
\end{eqnarray*}
see \cite{fpt-2000}.
We compute the amoeba of $f$ with our SDP via {\sc SOSTools} on a grid of size 
$250 \times 250$ lattice points in the area $[-3,4]^2$. 
In the SDP we restrict to polynomials of degree two. 
By Theorem~\ref{th:certificateforlopsided}, the approximation is exact in that case
(up to numerical issues). Figure \ref{FigureLinearVerification} visualizes
the SDP-based computation of the SOS certificates.
In the figure, at the white outer regions, certificates are found.
At the dark red points the SDP is infeasible; at the green points, no feasibility
is proven, but numerical instabilities are reported by the SDP solver.
The aspect of numerical stability of our SOS-based amoeba computations and
of general SOS computations is an important issue in convex
algebraic geometry which deserves further study. See \cite{Lofberg:Parrilo,Sun:Masterthesis} for existing work in this direction, based on choosing different bases.
\label{ex:LinearVerification}
\end{example}

\begin{ex}
As in Figure \ref{FigureExampleAmoeba} we consider the class of polynomials 
$f := Z_1^2 Z_2 + Z_1 Z_2^2 + c \cdot Z_1 Z_2 + 1$ with some constant $c \in \R$.
We use the monomial-based approach from Corollary~\ref{co:monomialbasedcertificate}.
In order to compute whether a given point $(\lambda_1,\lambda_2) \in (0,\infty)^2$ is contained
in the unlog amoeba $\mathcal{U}_f$, we have to consider the polynomials
\begin{eqnarray*}
	h_1 & = & (X_1^2X_2 - 2X_1Y_1Y_2 - Y_1^2 \cdot X_2) + (X_1X_2^2 - X_1Y_2^2 - 2Y_1X_2Y_2) \\
& & + c \cdot (X_1X_2 - Y_1Y_2) + 1 \, , \\
	h_2 & = & (X_1^2Y_2 + 2X_1Y_1X_2 - Y_1^2Y_2) + (2X_1X_2Y_2 + Y_1X_2^2 - Y_1Y_2^2) + c \cdot (X_1Y_2 + X_2Y_1) \, , \\
	h_3 & = & (X_1^2X_2 - 2X_1Y_1Y_2 - Y_1^2X_2)^2 + (X_1^2Y_2 + 2X_1Y_1X_2 - Y_1^2Y_2)^2 - (\lambda_1^2 \cdot \lambda_2)^2 \, , \\
	h_4 & = & (X_1X_2^2 - X_1Y_2^2 - 2Y_1X_2Y_2)^2 + (2X_1X_2Y_2 + Y_1X_2^2 - Y_1Y_2^2)^2 - (\lambda_1 \cdot \lambda_2^2)^2 \, , \\
	h_5 & = & (X_1X_2 - Y_1Y_2)^2 + (X_1Y_2 + X_2Y_1)^2 - (\lambda_1 \cdot \lambda_2)^2 \, .
\end{eqnarray*}

For the case $c=2$ and $c = -4$ we investigate $160 \times 160$ points in the 
area $[-4,4] \times [-5,3]$ and restrict the polynomials multiplied with the 
constraints to degree three, and thus restrict ourselves to a degree bound of six. The resulting amoeba $\mathcal{A}_f$ is depicted 
in Figure \ref{FigureExampleSOSAmoeba}. 
At the white points the SDP is feasible and thus these points belong to the complement component.
At the orange points the SDP is recognized as feasible with numerical issues (within a pre-defined 
range). At the black points in the center the SDP was infeasible without and at the turquoise points 
with numerical issues reported. At the red points in the upper right corner the program stopped due to exceeding numerical problems.
The union of the central black, the turquoise and part of the orange points provides the (degree bounded) approximation of the amoeba.
\label{Exa:GenusOne}
\end{ex}

\ifpictures
\begin{figure}[ht]
\includegraphics[width=0.45\linewidth]{./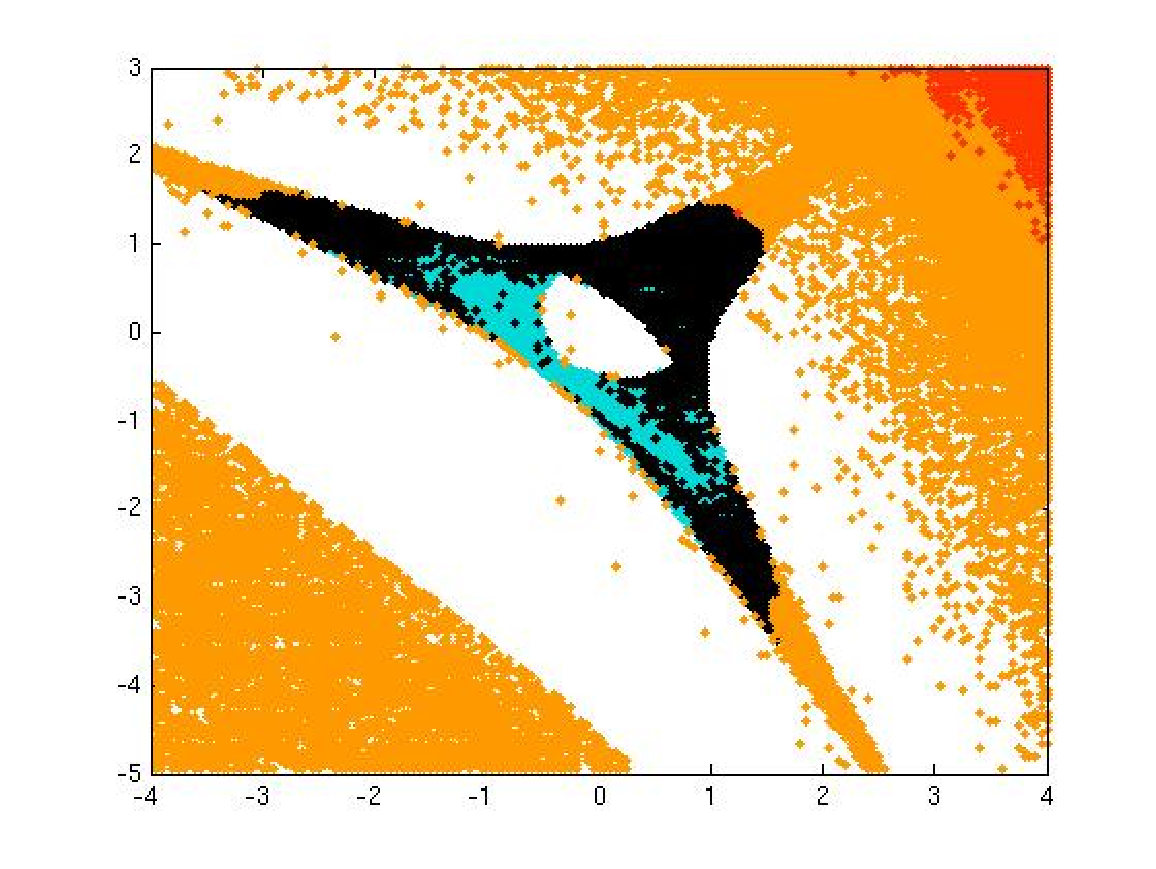} 
\includegraphics[width=0.45\linewidth]{./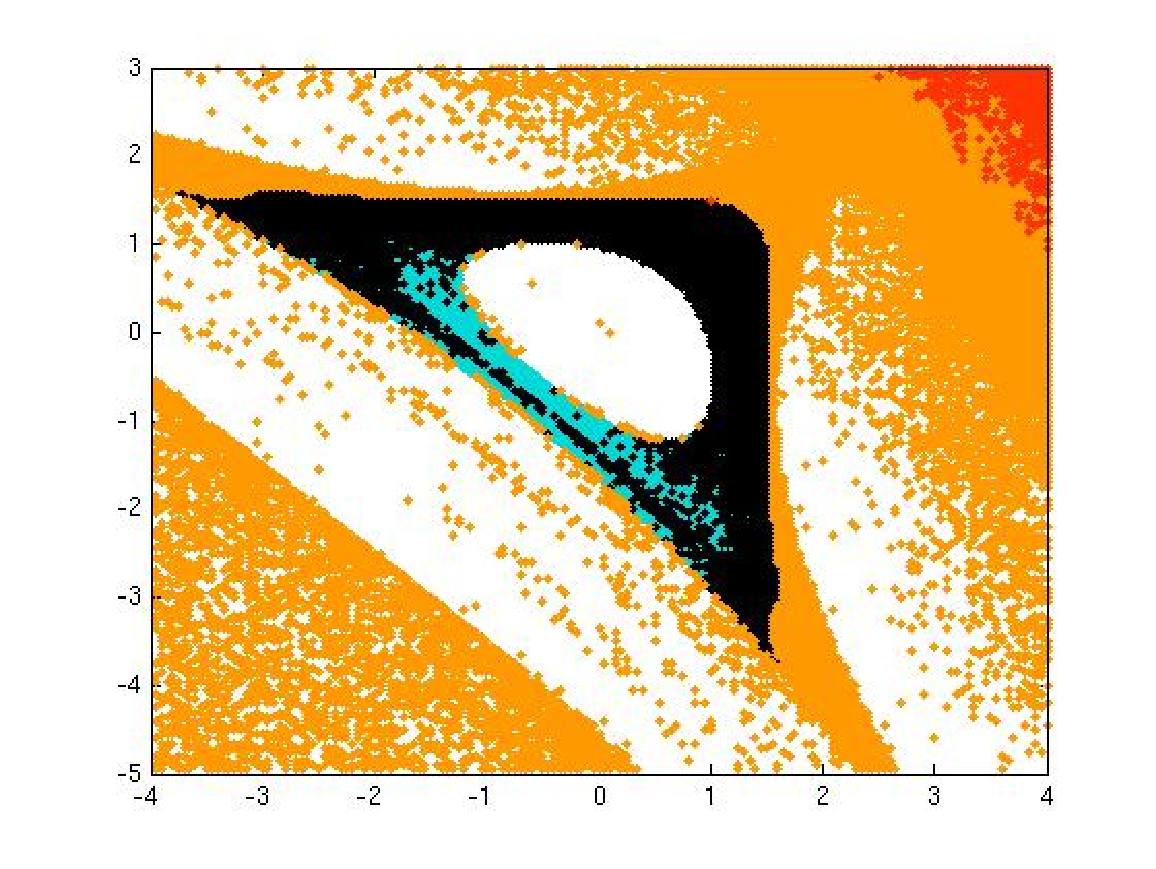}
	\caption{The amoeba of $f := Z_1^2 Z_2 + Z_1 Z_2^2 + c \cdot Z_1 Z_2 + 1$ approximated with {\sc SOStools} for $c = 2$ and $c = -4$. See the text of Example \ref{Exa:GenusOne} for an explanation of the colors and an adress of the numerical issues.}
	\label{FigureExampleSOSAmoeba}
\end{figure}
\fi

\subsection*{The diameter of inner complement components}

\iffullversion
We briefly discuss
\else
In the full version of the paper we discuss
\fi 
that the SOS-based certificates can also be used for more
sophisticated questions rather than the pure membership problem.
For this, we consider a family of polynomials $f \in \C[Z_1, \ldots, Z_n]$
whose Newton polytope is a simplex and which have $n+2$ monomials such that one 
of them is located in the interior of the simplex. Amoebas of polynomials of
this class have at 
most one inner complement component (see \cite{theobald-wolff-2011} for a 
comprehensive investigation of that class). 

Let $f := \sum_{i = 0}^{n+1} b_i \cdot Z^{\alpha(i)}$, and 
let $b_0$ denote the coefficient of the inner monomial. 
By the results in \cite{theobald-wolff-2011}, for $|b_0| \to \infty$ the inner 
complement component appears at the 
image under the $\Log$--map of a minimal point $\delta$ of the function 
$\hat{f} := \left| \frac{f}{\arg(b_0) \cdot Z^{\alpha(0)}} \right|$. 
$\delta$ is explicitly computable, and $|\delta|$ is unique. 
This allows to certify that a 
complement component of the unlog amoeba has a certain diameter $d$ under the scaling $|Z| \mapsto |Z|^2$ of the (unlog) amoeba basis space by solving the SDP corresponding to
\[
	\sum_{j = 1}^3 s_jg_j + H + 1 \ = \ 0
\]
with $s_1 = \sum_{i = 1}^n (|\delta_i|^2 - |Z_i|^2)^2 - d^2 / 4$,
$s_2 = \sum_{i = 0}^{n+1} \left(b_i \cdot Z^{\alpha(i)}\right)^{\re}$ and $s_3 = \sum_{i = 0}^{n+1} \left(b_i \cdot Z^{\alpha(i)}\right)^{\im}$,
where $g_j \in \C[Z]$ (restricted to some total degree) and 
$H$ is an SOS polynomial.

Feasibility of the SDP certifies that there exists no point 
$v \in \V(f) \cap \partial \mathcal{B}_{d/2}(\delta)$ (where
$\mathcal{B}_{d/2}(\delta)$ denotes the ball with radius $d/2$ centered in $\delta$) in the rescaled amoeba basis space.
Hence, the corresponding inner complement component of the 
unlog amoeba has at least a diameter $d$ in that space. We have to investigate the rescaled basis space of the unlog amoeba in order to transform the generic condition $(|\delta_i| - |Z_i|)^2 - d^2 / 4$ on the basis space of $\mathcal{U}_f$ into a polynomial condition, which is given by $s_1$ here.

Note that this works not only for polynomials in the class under investigation,
but for every polynomial as long as one knows, where a complement component appears.

\begin{example}
As before, let $f := Z_1^2Z_2 + Z_1Z_2^2 + c \cdot Z_1Z_2 + 1$ with a real parameter $c$. For
this class, the inner complement component appears at the point $(1,1)$ 
and thus under the $\Log$-map at the origin of $\Log(\R^2)$. 
The inner complement component exists for $c > 1$ 
and $c < -3$ (cf.\ \cite{theobald-wolff-2011}). 
We compute a bound for the diameter of the inner complement component using the upper SDP for the intervals $[-3,-9]$ and $[1,7]$ with steplength $0.1$. For any of these points we compute 14 SDPs in
order to estimate the radius (based on binary search).
In the rescaled amoeba basis space we
obtain the bounds shown in Figure~\ref{FigureDiameter}.

\begin{figure}[t]
	\begin{center}
	\begin{picture}(400,200)(0,0)
		\put(0,0){\includegraphics[scale=0.45]{./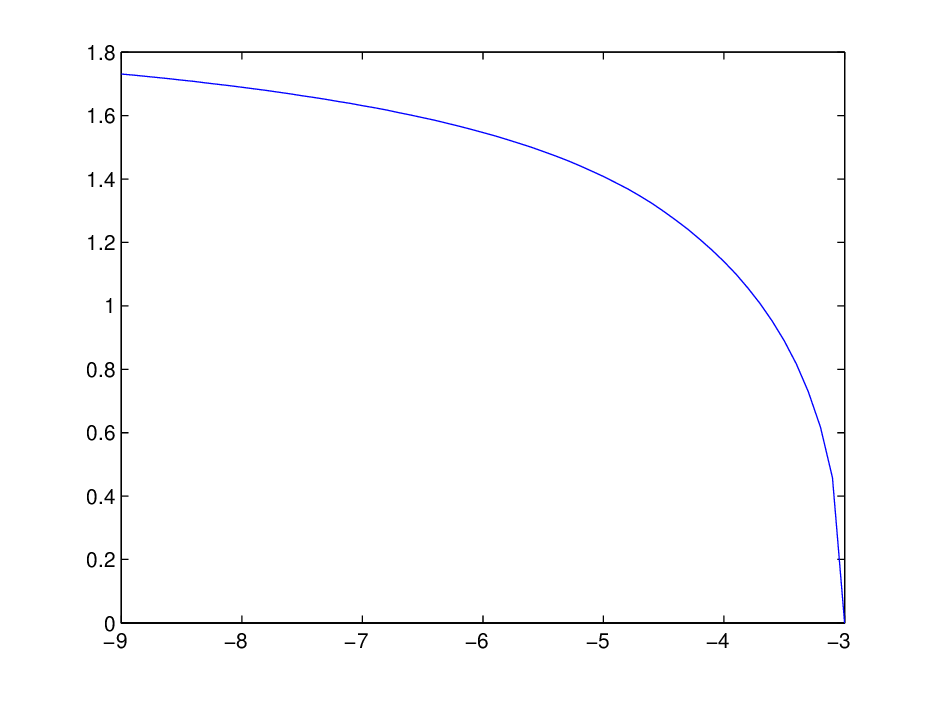}}
		\put(200,0){\includegraphics[scale=0.45]{./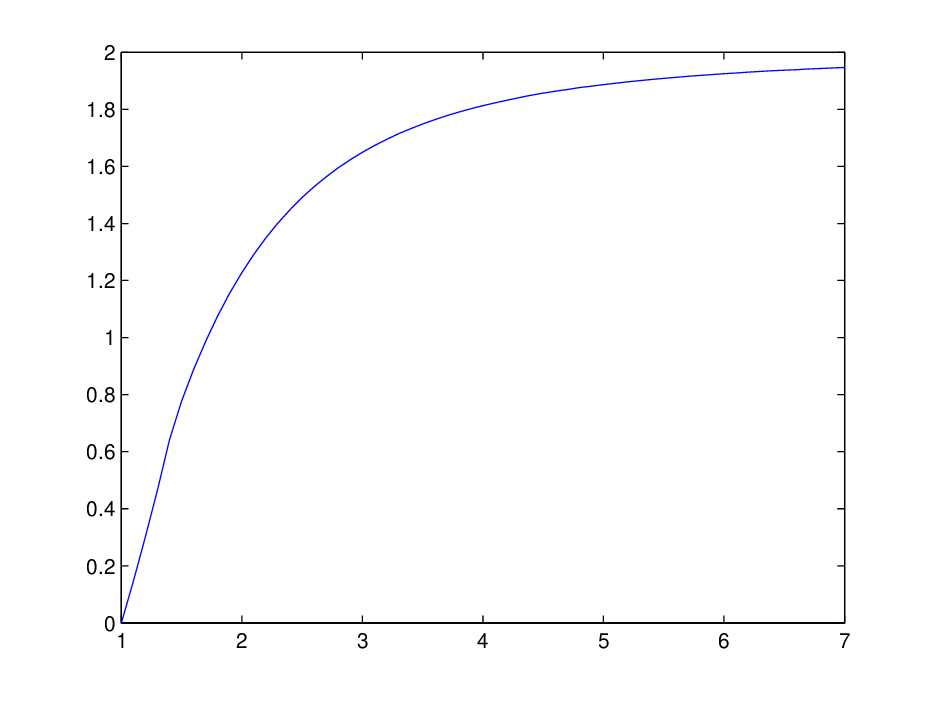}}
	\end{picture}
	\caption{Lower bound for the diameter of the inner complement component.}
	\label{FigureDiameter}
	\end{center}
\end{figure}

Observe that these bounds are lower bounds since feasibility of the SDP certifies membership in the complement of the amoeba but infeasibility only certifies that no certificate with polynomials of degree at most $k$ (i.e., 3 in our case) exists.
\end{example}

This approach also yields lower bounds for the diameter of the inner complement component of the (log) amoeba. The image of the circle $\sum_{i = 1}^n (|\delta_i|^2 - |Z_i|^2)^2 - r^2$ under rescaling and the $\Log$--map,
i.e.\ $\sum_{i = 1}^n (\log|\delta_i| - \log|Z_i|)^2 - r^2$, 
contains the set of points
\begin{eqnarray*}
	\left\{\left(|\delta_1| \cdot e^{r},\ldots,|\delta_n|\right),\ldots,\left(|\delta_1|,\ldots,|\delta_n|\cdot e^{r}\right),\left(|\delta_1|\cdot e^{\frac{r}{\sqrt{n}}},\ldots,|\delta_n|\cdot e^{\frac{r}{\sqrt{n}}}\right)\right\}.
\end{eqnarray*}
By convexity of the complement components in $\mathcal{A}_f$, the simplex
spanned by these points is contained in the inner complement component.
Hence the double radius of the insphere of that simplex is a lower bound for the 
diameter of the inner complement component of~$\mathcal{A}_f$.

\section{Outlook\label{se:outlook}}

We have developed foundations and techniques for approximating
amoebas and coamoebas based on the real Nullstellensatz
and sums-of-squares techniques. While our focus was
on developing the core principles of the computational
methodology, some experimental results were provided to
show the validity of the approach.
Beyond the specific
results we have presented, our approach can be seen
as a first systematic treatment of amoebas
from a (computational) real algebraic point of view
and we think that this viewpoint will have more
potential to offer.

Major current challenges involve both computational
issues (concerning the quality and efficiency of
computations) as well theoretical questions on
on the real algebraic viewpoint on amoebas. To
name a specific open question in the latter respect,
recall that in Theorem~\ref{th:certificateforlopsided}, 
for a hypersurface amoeba
$\mathcal{A}_f$ we could deduce the order of
a complement component  from the special certificates
we treated in that theorem. However, it is
an open and important structural question
how to deduce the order for a point $w$ in the
complement of $\mathcal{A}_f$ given an arbitrary
Nullstellensatz certificate for $w$.

\subsection*{Acknowledgment.} Thanks to Mihai Putinar for pointing
out reference~\cite{niculescu-putinar-2010} and to an anonymous referee for careful reading and helpful suggestions.
Part of this work was done while the first
author was visiting Universidad de Cantabria in Santander. He would like to
express his gratitude for the hospitality. 

\providecommand{\bysame}{\leavevmode\hbox to3em{\hrulefill}\thinspace}
\providecommand{\MR}{\relax\ifhmode\unskip\space\fi MR }
\providecommand{\MRhref}[2]{%
  \href{http://www.ams.org/mathscinet-getitem?mr=#1}{#2}
}
\providecommand{\href}[2]{#2}

\bibliographystyle{amsplain}
\bibliography{bibapproximatingamoebas}

\end{document}